\documentclass[reqno]{amsart}
\usepackage[scale=0.75, centering, headheight=14pt]{geometry}
\usepackage[latin1]{inputenc}
\usepackage[T1]{fontenc}
\usepackage{lmodern}
\usepackage[english]{babel}
\usepackage{amsmath}

\usepackage{amsmath,amssymb,amsfonts,amsthm}
\usepackage{mathtools,accents}
\usepackage{mathrsfs}
\usepackage{xfrac}
\usepackage{array} 
\usepackage{aliascnt}
\usepackage{booktabs} %%%%% for much better looking tables
\usepackage{array} %%%%% for better arrays (eg matrices) in maths

\usepackage{verbatim} %%%%% adds environment for commenting out blocks of text & for better verbatim
\usepackage{subfig} %%%%% make it possible to include more than one captioned figure/table in a single float

\usepackage{mathrsfs, dsfont}
\usepackage{amssymb}
\usepackage{amsthm}
\usepackage{amsmath,amsfonts,amssymb,esint}
\usepackage{graphics,color}
\usepackage{enumerate}
\usepackage{mathtools,centernot}
\usepackage{esint}

\usepackage{microtype}
\usepackage{paralist} %%%%% very flexible & customisable lists (eg. enumerate/itemize, etc.)
\usepackage{cases}
\usepackage[initials, alphabetic]{amsrefs}
\allowdisplaybreaks

\usepackage{braket}
\usepackage{bm}

\usepackage[citecolor=blue,colorlinks]{hyperref}
\addto\extrasenglish{}

\usepackage{enumerate}
\usepackage{xcolor}
\usepackage{aliascnt}

\makeatletter
\def\newaliasedtheorem#1[#2]#3{
  \newaliascnt{#1@alt}{#2}
  \newtheorem{#1}[#1@alt]{#3}
  \expandafter\newcommand\csname #1@altname\endcsname{#3}
}
\makeatother

\usepackage{indentfirst}

\usepackage{esint}
\usepackage{mathrsfs}
\usepackage{stmaryrd}

\makeatletter
\newsavebox{\measure@tikzpicture}

\newcommand{\setword}[2]{%
  \phantomsection
  #1\def\@currentlabel{\unexpanded{#1}}\label{#2}%
}

\renewcommand\labelenumi{(\roman{enumi})}
\renewcommand\theenumi\labelenumi

\newtheorem{theorem}{\bf Theorem}[section]

\newtheorem{proposition}[theorem]{\bf Proposition}

\newtheorem{remark}[theorem]{\bf{Remark}}
\newtheorem{definition}[theorem]{\bf Definition}
\newtheorem{lemma}[theorem]{\bf Lemma}

\newtheorem*{theorem*}{Theorem}

\newcommand{\eps}{\varepsilon}
\newcommand{\R}{\mathbb R}
\newcommand{\N}{\mathbb N}

\newcommand{\diam}{\operatorname{diam}}

\newcommand{\divergence}{\operatorname{div}}

\newcommand{\sym}{\operatorname{sym}}

\newcommand{\supp}{\operatorname{supp}}

\newcommand{\Tr}{\operatorname{Tr}}
\newcommand{\Leb}{\mathcal{L}}

\newcommand{\MF}{\mathcal{M}}

\DeclareMathOperator{\loc}{loc}

\DeclareMathOperator{\app}{app}

\DeclareMathOperator{\BMO}{BMO}

\newcommand{\setR}{\mathbb{R}}

\newcommand{\abs}[1]{\left\lvert#1\right\rvert}
\newcommand{\norm}[1]{\left\lVert#1\right\rVert}

\newcommand{\di}{\mathop{}\!\mathrm{d}}

\let\div\undefined
\DeclareMathOperator{\div}{div}

\allowdisplaybreaks
\numberwithin{equation}{section}

\title[Regularity estimates in transport equations]{Regularity estimates in transport equations via heat flow and quantitative differentiation}

\author[Elia Bru\'e]{Elia Bru\'e}
\address{Bocconi University, Department of Decision Sciences, Via Serfatti 25, 20136 Milano, Italy}
\email{elia.brue@unibocconi.it}

\author[Maria Colombo]{Maria Colombo}
\address{Institute of Mathematics, EPFL, Station 8, 1015 Lausanne, Switzerland}
\email{maria.colombo@epfl.ch}

\author[Guido De Philippis]{Guido De Philippis}
\address{Dipartimento di Matematica "Tullio Levi-Civita", Via Trieste 63, Torre Archimede, Padova, Italy}
\email{guido.dephilippis@unipd.it}

\author[Carl J. P. Johansson]{Carl Johan Peter Johansson}
\address{Institute of Mathematics, EPFL, Station 8, 1015 Lausanne, Switzerland}
\email{carl.johansson@epfl.ch}

\usepackage{etoolbox}

\begin{document}

\begin{abstract}
 { 
The purpose of this note is twofold. 
First, we prove quantitative estimates for the Ambrosio-Trevisan commutator which implies propagation of logarithmic Sobolev regularity.
Second, we prove a similar estimate for the DiPerna-Lions commutator by relying on quantitative differentiation.
}
\end{abstract}

\maketitle

\tableofcontents

\section{Introduction}

In the seminal papers of DiPerna-Lions \cite{DPL89} and Ambrosio \cite{A04}, well-posedness of the transport equation

\begin{equation}\label{Transport equation}
	\partial_t u_t+b_t\cdot \nabla u_t=0\qquad\text{in}\ [0,T] \times \R^d;
	\tag{Tr}
\end{equation}
was proved for Sobolev velocity fields and for velocity fields of bounded variation.

In the paper \cite{LADT14}, Ambrosio and Trevisan have generalized the classical DiPerna-Lions theory (see \cite{DPL89, A04}) to the abstract framework of metric measure spaces. Their argument, based on a parabolic approximation procedure along with a commutator estimate, is new and interesting even in the Euclidean setting. In this note we first show how Ambrosio-Trevisan's approach can be employed to easily obtain propagation of the ``log-Sobolev regularity'' in the spirit of \cite{FL18,EBQHN21, DMCS24}. Secondly, we show how quantitative differentiation, going back to works of Dorronsoro \cite{JRD85} can be employed to propagate regularity using DiPerna-Lions's approach.

Solutions of \eqref{Transport equation} are understood in the distributional sense. To be more precise,  we assume integrability conditions of the form $b\in L^1([0,T];L^p(\setR^d;\R^d))$, $\div b\in L^1([0,T];L^p(\setR^d))$
and $u\in L^{\infty}([0,T]; L^q(\setR^d))$ with $\frac{1}{p}+\frac{1}{q}\le 1$ and we say that $(u,b)$ is a solution to \eqref{Transport equation} if the map $t\to \int u_t\phi \di x$ is absolutely continuous for any $\phi\in C^{\infty}_c(\setR^d)$ and satisfies
\begin{equation*}
\frac{\di}{\di t} \int_{\R^d} \phi u_t \di x
=\int_{\R^d} u_t\div b_t \phi\di x+\int_{\R^d} u_t b_t\cdot \nabla \phi \di x.                            
\end{equation*}
We can also assume, without loss of generality, that $t\to u_t\in L^q(\setR^d)$ is a continuous map with respect to the weak (or weak-star when $q=\infty$) topology of $L^q$.

Under the incompressibility assumption, i.e. $\div b=0$, it is easily seen that regular solutions with bounded support to \eqref{Transport equation} satisfy 
\begin{equation}\label{beta norm is preserved}
\int_{\R^d} \beta(u_t)\di x=\int_{\R^d}\beta(u_0)\di x\quad \text{$\forall t\in[0,T]$ and $\beta:\setR\to [0,\infty)$ Lipschitz with $\beta(0)=0$}.
\end{equation}
This property can be checked by observing that $\beta(u_t)$ is a solution to \eqref{Transport equation} as well
\begin{equation*}
	\partial_t\beta(u_t)+b_t\cdot \nabla \beta(u_t)=\beta'(u_t)\partial_tu_t+\beta'(u_t)b_t\cdot \nabla u_t=0,
\end{equation*}
and therefore
\begin{equation*}
	\frac{\di}{\di t} \int_{\R^d} \beta(u_t) \di x 
	=-\int_{\R^d} b_t\cdot \nabla \beta(u_t) \di x
	=-\int_{\R^d} \div (b_t\ \beta (u_t))\di x=0.	
\end{equation*}
Despite its simplicity, we can draw important consequences from \eqref{beta norm is preserved}, for instance we obtain the uniqueness property for the Cauchy problem associated to \eqref{Transport equation}.

The identity \eqref{beta norm is preserved} does not hold in general, some regularity and integrability on both $u$ and $b$ is needed. In this note we focus on the setting considered by DiPerna-Lions in their influential paper \cite{DPL89}:
\begin{equation}\label{HP}
b\in L^1([0,T];L^{\infty}(\setR^d)\cap W^{1,p}(\setR^d)),\quad
u\in L^{\infty}([0,T];L^{q}(\setR^d))\quad \text{with}\quad \frac{1}{p}+\frac{1}{q}=1.
\tag{HP}
\end{equation}
A typical approach to extend the identity \eqref{beta norm is preserved} to a weak setting, like ours, builds upon regularization arguments and commutator estimates. 
To give a more precise idea let us consider smooth mollifiers $\rho_{\eps}$ and set $u_t^{\eps}:=\rho_{\eps}\ast u_t\in L^q(\setR^d)\cap C^{\infty}(\setR^d)$ for any $t\in [0,T]$. Then
\begin{equation}\label{eq:CommutatorInIntro}
	\partial_t u_t^{\eps}+b_t\cdot \nabla u_t^{\eps}=b_t\cdot \nabla (\rho_\eps\ast u_t)-\rho_{\eps}\ast(b_t\cdot\nabla u_t) =: [b_t \cdot \nabla, \rho_{\eps} \ast] u_t,
\end{equation}
and since $u_t^{\eps}$ is smooth we have
\begin{equation*}
	\partial_t \frac{1}{2}\beta(u_t^{\eps})+b_t\cdot \nabla \beta(u_t^{\eps})=\beta'(u_t^{\eps})[b_t \cdot \nabla, \rho_{\eps} \ast] u_t,
\end{equation*}
in particular \eqref{beta norm is preserved} holds provided the commutator $[b_t \cdot \nabla, \rho_{\eps} \ast] u_t \to 0$ in $L^1$. 
This convergence has been proved in the Euclidean setting by DiPerna-Lions-Ambrosio \cite{DPL89,A04} and in the setting of metric measure spaces by Ambrosio-Trevisan \cite{LADT14}. The DiPerna-Lions commutator and Ambrosio-Trevisan commutator (both defined in Section~\ref{sec:Commutators}) are denoted by $R^{\eps}(b,u)$ and $C^{\eps}(b,u)$ respectively. In a very recent paper by Huysmans and Said \cite{LHARS24}, quantitative estimates of the DiPerna-Lions commutator were proved using tools from harmonic analysis.
This is closely connected to the earlier work of Meyer and Seis \cite{DMCS24}, which pioneered a Littlewood-Paley approach to regularity for transport equations with Sobolev velocity fields.

We reprove versions of such quantitative estimates using two new techniques: heat flow and quantitative differentiation. We begin with the results relying on heat flow.

\begin{theorem}\label{th:commutatorgeneral}
   	Let $1<p\le \infty$, $1<q<\infty$, $1<r<\infty$ satisfy
   $\frac{1}{p}+\frac{1}{q}+\frac{1}{r}=1$. 	
   For any $b\in W^{1,p}(\setR^d; \R^d)$ with $\div b=0$, $u\in L^q(\setR^d)$ and $\phi\in L^r(\setR^d)$ it holds
   \begin{equation}\label{eq:main}
   	\int_0^1 \abs{\int_{\R^d} C^{\eps}(b,u)\phi \di x}\frac{\di \eps}{\eps}
   	\leq C(d,p,q,r) \norm{\nabla_{\text{sym}}b}_{L^p}\norm{u}_{L^q}\norm{\phi}_{L^r}.
   \end{equation}
\end{theorem}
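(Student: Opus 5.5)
The plan is to write the Ambrosio--Trevisan commutator as an integral along the heat flow, so that $b$ enters only through $\nabla_{\text{sym}}b$. We then estimate the resulting double integral in the heat-flow times by Hilbert's inequality and Littlewood--Paley square functions.

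\textbf{Step 1: duality formula.} With $P_s$ the heat semigroup, the pairing $\int C^{\eps}(b,u)\phi\,\di x$ is, up to sign, $\int P_\eps u\, b\cdot\nabla\phi\,\di x-\int u\, b\cdot\nabla P_\eps\phi\,\di x$. I would check this against the definition in Section~\ref{sec:Commutators} and the self-adjointness of $P_\eps$. Set
\begin{equation*}
G(s):=\int_{\R^d} P_s u\; b\cdot\nabla P_{\eps-s}\phi\,\di x,\qquad s\in[0,\eps].
\end{equation*}
The pairing is then $G(\eps)-G(0)=\int_0^\eps G'(s)\,\di s$. Differentiating,
\begin{equation*}
G'(s)=\int \Delta P_s u\, b\cdot\nabla P_{\eps-s}\phi-\int P_s u\, b\cdot\nabla\Delta P_{\eps-s}\phi .
\end{equation*}
Integrating by parts and using $\div b=0$, the third-order terms cancel. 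This is the identity $\Delta f\,(b\cdot\nabla g)-f\,(b\cdot\nabla\Delta g)=-2\,\nabla_{\text{sym}}b(\nabla f,\nabla g)$ modulo divergences and terms carrying $\div b$. It gives
\begin{equation*}
G'(s)=-2\int_{\R^d}\nabla_{\text{sym}}b:\nabla P_s u\otimes\nabla P_{\eps-s}\phi\,\di x .
\end{equation*}
All manipulations are first done for smooth, compactly supported $b,u,\phi$. The general case follows by density, since both sides of \eqref{eq:main} are continuous in the relevant norms once the estimate is proven.

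\textbf{Step 2: Hilbert's inequality in the time variables.} Step 1 gives
\begin{equation*}
\int_0^1\abs{\int C^\eps(b,u)\phi}\frac{\di\eps}{\eps}\le 2\int_{\R^d}\abs{\nabla_{\text{sym}}b}(x)\int_0^1\!\!\int_0^\eps \abs{\nabla P_su}(x)\,\abs{\nabla P_{\eps-s}\phi}(x)\,\di s\,\frac{\di\eps}{\eps}\,\di x .
\end{equation*}
Change variables $t=\eps-s$ and drop the constraint $s+t<1$. The inner integral is then bounded by
\begin{equation*}
\int_0^\infty\!\!\int_0^\infty\frac{F_x(s)\,H_x(t)}{s+t}\,\di s\,\di t,\qquad F_x(s)=\abs{\nabla P_su(x)},\quad H_x(t)=\abs{\nabla P_t\phi(x)}.
\end{equation*}
The kernel $1/(s+t)$ is the Hilbert--Hardy kernel, so Hilbert's inequality bounds this pointwise in $x$ by $\pi\,\mathcal G u(x)\,\mathcal G\phi(x)$. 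Here $\mathcal G f(x):=\bigl(\int_0^\infty\abs{\nabla P_sf(x)}^2\di s\bigr)^{1/2}$ is the Littlewood--Paley $g$-function associated with the heat semigroup.

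\textbf{Step 3: conclusion.} Apply H\"older in $x$ with $\frac1p+\frac1q+\frac1r=1$; the case $p=\infty$ is allowed. This gives the bound $2\pi\norm{\nabla_{\text{sym}}b}_{L^p}\norm{\mathcal Gu}_{L^q}\norm{\mathcal G\phi}_{L^r}$. Stein's Littlewood--Paley theory then gives $\norm{\mathcal Gf}_{L^s}\le C(d,s)\norm{f}_{L^s}$ for $1<s<\infty$, which is exactly where $q,r\in(1,\infty)$ is needed.

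The main obstacle is the scale-invariance. A naive bound such as $\abs{\nabla P_su}\lesssim s^{-1/2}\mathcal Mu$ produces a logarithmic divergence in $\int\di\eps/\eps$. Step 2 is designed precisely to avoid this: the exact homogeneity of $1/(s+t)$ is what lets Hilbert's inequality convert the $\di\eps/\eps$ integral into a product of two $L^2(\di s)$ square functions. The other point needing care is the integration-by-parts identity in Step 1, in particular checking that the only surviving first-order term involves the symmetric part of $\nabla b$.
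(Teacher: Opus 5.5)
Your proposal is correct and follows the paper's proof essentially step by step. The steps are the heat-flow interpolation identity, the substitution producing the homogeneous kernel $\frac{1}{s+t}$, and then H\"older together with the $L^p$-boundedness of the heat square function; your $G(\eps)-G(0)$ is just the dual form of the paper's $\int_0^\eps \frac{\di}{\di s}P_{\eps-s}(b\cdot\nabla P_s u)\,\di s$. The only cosmetic difference is that you invoke Hilbert's inequality directly, whereas the paper proves the $L^2((0,\infty))$-boundedness of $H\phi(s)=\int_0^\infty\phi(\eps)\frac{\di\eps}{\eps+s}$ via a weak-type bound and Marcinkiewicz interpolation, which amounts to the same thing.
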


Throughout this paper, $C(\ast)$ denotes a constant depending on $\ast$, which may vary from line to line.
This quantitative estimate implies propagation of logarithmic regularity.

\begin{theorem}\label{thm:LogRegularityViaHeatFlow}
	Let $1<p\le \infty$, $1<q<\infty$, $1<r<\infty$ satisfy
	$\frac{1}{p}+\frac{1}{q}+\frac{1}{r}=1$. 
	If $u_t$ and $b_t$ solve \eqref{Transport equation} and satisfy
	\begin{equation*}
	b\in L^1([0,T];L^{\infty} \cap W^{1,p}(\setR^d; \R^d)),
	\quad
	\div b=0,
	\quad
	u\in L^{\infty}([0,T];L^{q} \cap L^r(\setR^d))
	\end{equation*}
	then
	\begin{equation*}
	[u_t]_{\log}^2-[u_0]_{\log}^2\leq C(d,p,q,r) 
	\norm{\nabla_{\text{sym}} b}_{L^1_tL^p_x}\norm{u}_{L^{\infty}_tL^q_x}\norm{u}_{L^{\infty}_tL^r_x}.
	\end{equation*}
\end{theorem}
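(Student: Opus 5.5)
We have to guess what $[u]_{\log}$ and $C^\eps$ are. The Ambrosio--Trevisan commutator presumably uses the heat semigroup $P_\eps$:
\[
C^\eps(b,u)=\div(P_\eps u\, b)-P_\eps(\div(ub)),
\]
or its weak form $\int C^\eps(b,u)\phi = \int u\, b\cdot\nabla P_\eps\phi - \int P_\eps u\, b\cdot \nabla\phi$. A natural log-seminorm is then
\[
[u]_{\log}^2=\int_0^1 \|\nabla P_\eps u\|_{L^2}^2\, \eps\,\frac{\di\eps}{\eps}
\]
or an equivalent quantity. A candidate that differentiates cleanly is
\[
[u]_{\log}^2 = \int_0^1 \big(\|u\|_{L^2}^2-\|P_\eps u\|_{L^2}^2\big)\frac{\di\eps}{\eps}.
\]
I will work with this one: since $\frac{d}{d\eps}\|P_\eps u\|^2=-2\|\nabla P_\eps u\|^2$, it matches the previous expression up to a constant. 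Because $\div b=0$ and $u\in L^2$ (by interpolation of $L^q$ and $L^r$, as $1/q+1/r\le1$ forces exponents on both sides of 2, or at least it suffices), $\|u_t\|_{L^2}$ is conserved, so it suffices to control how $\|P_\eps u_t\|_{L^2}^2$ changes in time.

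The plan is to differentiate $t\mapsto \|P_\eps u_t\|^2_{L^2}=\int u_t\,P_{2\eps}u_t$. Take $\phi=P_{2\eps}u_t$, which is smooth, as a test function. Using the weak formulation (justified through the regularity of $P_{2\eps}$ and the absolute continuity in $t$), with $\div b=0$ one gets
\[
\frac{d}{dt}\int u_tP_{2\eps}u_t=2\int u_t\, b_t\cdot\nabla P_{2\eps}u_t .
\]
By antisymmetry, $\int P_{2\eps}u\, b\cdot\nabla u$ vanishes formally, so this equals
\[
2\int C^{2\eps}(b_t,u_t)\,u_t
\]
(with the weak definition above and $\phi=u_t$; the second term $\int P_{2\eps}u\,b\cdot\nabla u=-\int u\, b\cdot\nabla P_{2\eps}u$ must be read carefully and possibly symmetrized, giving a factor that I will absorb into the constant). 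Hence
\[
\frac{d}{dt}\big(-\|P_\eps u_t\|^2\big)=-2\int C^{2\eps}(b_t,u_t)u_t,
\]
and so
\[
\frac{d}{dt}[u_t]_{\log}^2\le 2\int_0^1\Big|\int C^{2\eps}(b_t,u_t)u_t\Big|\frac{\di\eps}{\eps}.
\]
After the change of variables $\eps\mapsto2\eps$, the range becomes $(0,2)$. The piece over $(0,1)$ is bounded by Theorem~\ref{th:commutatorgeneral} with $\phi=u_t\in L^r$, giving $C\|\nabla_{\rm sym}b_t\|_{L^p}\|u_t\|_{L^q}\|u_t\|_{L^r}$. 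The piece over $[1,2]$ is handled either by the same estimate with rescaling, or by restating the theorem on $(0,2)$ since its proof is scale-invariant. Integrating in $t$ and using Hölder in time gives the claim.

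The main obstacle is rigor in the time differentiation. First, the time derivative of $\int u_tP_{2\eps}u_t$ must be computed for weak solutions; I would mollify in time, or use that $P_\eps u_t$ solves the equation with the commutator as a source, the term $b\in L^\infty$ ensuring the required integrability. Second, the swap of $\frac{d}{dt}$ with the $\eps$-integral has to be justified: I would integrate in time first for each fixed $\eps$, then integrate in $\eps$ using Fubini/Tonelli on the absolute values, which are bounded by the commutator estimate. Finally, the exact identification of $\int u\,b\cdot\nabla P_{2\eps}u$ with the paper's commutator must be checked against its precise definition.
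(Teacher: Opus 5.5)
Your mechanism is right, but as written it proves the estimate for a different functional than the one in the statement, and the reason you give for switching functionals does not hold.

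\textbf{The functional.} The paper's seminorm is $[u]_{\log}^2=\int_0^1\norm{P_\eps u-u}_{L^2}^2\frac{\di\eps}{\eps}$. You work instead with $\int_0^1(\norm{u}_{L^2}^2-\norm{P_\eps u}_{L^2}^2)\frac{\di\eps}{\eps}$. Your claim that this matches $\int_0^1\norm{\nabla P_\eps u}_{L^2}^2\di\eps$ up to a constant is false. Since $\norm{u}_{L^2}^2-\norm{P_\eps u}_{L^2}^2=2\int_0^\eps\norm{\nabla P_su}_{L^2}^2\di s$, your functional equals $2\int_0^1\norm{\nabla P_su}_{L^2}^2\log(1/s)\di s$. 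By contrast, $\int_0^1\norm{\nabla P_\eps u}_{L^2}^2\di\eps\le\frac12\norm{u}_{L^2}^2$ measures no logarithmic regularity at all.

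More to the point, the theorem bounds an \emph{increment} $[u_t]_{\log}^2-[u_0]_{\log}^2$. Such a bound does not pass between functionals that are merely comparable. Your functional minus the paper's is $2\int_0^1\langle P_\eps u-P_{2\eps}u,u\rangle\frac{\di\eps}{\eps}$, which changes in time and would have to be controlled separately.

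\textbf{The repair.} It uses only your own identity. First reduce to $u_0\in\mathcal{S}$ and smooth $b_t$, as the paper does. This is needed: $u\in L^q\cap L^r$ does not imply $u\in L^2$ in general, e.g.\ for $p=q=r=3$. Then
\[
\frac{\di}{\di t}\langle P_su_t,u_t\rangle=2\int_{\R^d} u_t\,b_t\cdot\nabla P_su_t\di x=\int_{\R^d} C^s(b_t,u_t)u_t\di x .
\]
Two small corrections here: $\int P_su\,b\cdot\nabla u\di x$ does not vanish but equals $-\int u\,b\cdot\nabla P_su\di x$, and the coefficient in front of $\int C^s(b_t,u_t)u_t\di x$ is $1$, not $2$.

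Now expand $\norm{P_\eps u-u}_{L^2}^2=\norm{u}_{L^2}^2-2\langle P_\eps u,u\rangle+\langle P_{2\eps}u,u\rangle$ and use conservation of $\norm{u_t}_{L^2}$:
\[
\frac{\di}{\di t}\norm{P_\eps u_t-u_t}_{L^2}^2=\int_{\R^d} C^{2\eps}(b_t,u_t)u_t\di x-2\int_{\R^d} C^{\eps}(b_t,u_t)u_t\di x .
\]
Integrate in $\frac{\di\eps}{\eps}$ over $(0,1)$ and in time. Then apply Theorem~\ref{th:commutatorgeneral} with $\phi=u_t$ on the range $(0,2)$; the Claim in its proof is stated over $(0,\infty)$, so this range is covered. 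This gives the result.

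\textbf{Comparison with the paper.} Once corrected, your route differs from the paper's in a useful way. The paper differentiates $\frac12\norm{P_\eps u_t-u_t}_{L^2}^2$ through the transport equation for $P_\eps u_t-u_t$ and obtains $\int C^\eps(b,u)(P_\eps u-u)\di x$. It then needs the variant of Theorem~\ref{th:commutatorgeneral} for $\int P_\eps C^\eps(b,u)\,u\di x$, which is Remark~\ref{remark:variation commutator estimate}, stated without proof. Your computation shows
\[
\int P_\eps C^\eps(b,u)\,u\di x=\int C^\eps(b,u)\,P_\eps u\di x=\tfrac12\int C^{2\eps}(b,u)\,u\di x .
\]
So in the case $\phi=u$, which is the only case the proof uses, that variant is just Theorem~\ref{th:commutatorgeneral} at scale $2\eps$.
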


This estimate also extends to the case $b \in L^1_t W^{1,1}_x$ with $\nabla b \in L^1_t \mathcal{H}^1_x$ where $\mathcal{H}^1$ denotes the Hardy space. We state and prove such a result in the appendix, see Theorem~\ref{thm:RegularityThmHardy}.

Finally, we show that, using quantitative differentiation, quantitative estimates for the DiPerna-Lions commutator can be obtained. Precisely, we prove the following:

\begin{theorem}\label{thm:CommutatorDorronsoro}
 Let $1 < p, q < \infty$ be such that $\frac{1}{p} + \frac{1}{q} = 1$. 
 Let $b \in W^{1,p}(\R^d; \R^d)$, $\divergence b = 0$, $u \in L^q \cap L^{\infty}(\R^d)$.
 Then, for any $\phi \in L^q \cap L^{\infty}(\R^d)$ it holds that
 \begin{equation}\label{eq:CommutatorDorronsoroConclusion}
  \left( \int_0^{\infty} \left| \int_{\R^d} R^{\eps}(b,u) \phi \, dx \right|^2 \dfrac{\di \eps}{\eps} \right)^{1/2} \leq C(d,p,q) \| b \|_{W^{1,p}} \left( \| u \|_{L^q} \| \phi \|_{L^{\infty}} + \| u \|_{L^{\infty}} \| \phi \|_{L^{q}} \right).
 \end{equation}
\end{theorem}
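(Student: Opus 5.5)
Using $\div b=0$ and testing against $\phi$, I write the DiPerna--Lions commutator $R^\eps(b,u)=b\cdot\nabla(\rho_\eps\ast u)-\rho_\eps\ast(b\cdot\nabla u)$ in the standard form
\begin{equation*}
\int_{\R^d} R^{\eps}(b,u)\phi\,dx=\iint u(y)\,\big(b(x)-b(y)\big)\cdot\nabla\rho_\eps(x-y)\,\phi(x)\,dy\,dx .
\end{equation*}
The idea is to subtract the best affine approximation of $b$ at scale $\eps$. For each $x$ and $\eps$, let $A_{x,\eps}(z)=a_{x,\eps}+M_{x,\eps}(z-x)$ be the affine map minimizing $\fint_{B_{2\eps}(x)}|b-A|$. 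Then
\begin{equation*}
b(x)-b(y)=\big(b(x)-A_{x,\eps}(x)\big)-\big(b(y)-A_{x,\eps}(y)\big)+M_{x,\eps}(x-y).
\end{equation*}
The linear part contributes $\iint u(y)\,M_{x,\eps}(x-y)\cdot\nabla\rho_\eps(x-y)\,\phi(x)$. The kernel $z\mapsto z\otimes\nabla\rho_\eps(z)$ has integral $-\mathrm{Id}$, and $\operatorname{tr}M=\div b$ morally vanishes, so this term is of the form $\int\phi\,(M_{x,\eps}:K_\eps\ast u)$ with a mean-zero-trace kernel. Writing $K_\eps\ast u=-u\,\mathrm{Id}+(K_\eps\ast u+u\,\mathrm{Id})$, the first piece gives $-\int\phi\,u\operatorname{tr}M_{x,\eps}$. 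I would take $M_{x,\eps}$ so that its trace is controlled by $\div b=0$; one option is to replace $M_{x,\eps}$ by the average $(\nabla b)_{B_{2\eps}(x)}$, which is trace-free, at the cost of another Dorronsoro-type error. The second piece involves $K_\eps\ast u+u$, which is a Littlewood--Paley-type difference, and should be square-function summable against the bounded matrix field. The remainder terms are bounded by
\begin{equation*}
\Big|\int R^\eps\phi\Big|\lesssim \|u\|_{L^\infty}\int|\phi(x)|\,\beta(x,\eps)\,dx+\|\phi\|_{L^\infty}\int |u(y)|\,\beta'(y,\eps)\,dy,
\end{equation*}
where
\begin{equation*}
\beta(x,\eps)=\eps^{-1}\fint_{B_{2\eps}(x)}|b-A_{x,\eps}|.
\end{equation*}
Here $|\nabla\rho_\eps|\lesssim\eps^{-d-1}$ on $B_\eps$, and $\beta'$ is the analogous quantity seen from $y$, obtained after exchanging integrals and enlarging balls. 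This is the source of the two products $\|u\|_{L^q}\|\phi\|_{L^\infty}+\|u\|_{L^\infty}\|\phi\|_{L^q}$ in \eqref{eq:CommutatorDorronsoroConclusion}.

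The key input is Dorronsoro's quantitative differentiation theorem:
\begin{equation*}
\Big\|\Big(\int_0^\infty\beta(\cdot,\eps)^2\tfrac{\di\eps}{\eps}\Big)^{1/2}\Big\|_{L^p}\le C(d,p)\|\nabla b\|_{L^p},\qquad 1<p<\infty .
\end{equation*}
Given this, I apply Minkowski's integral inequality: the $L^2(\di\eps/\eps)$ norm of $\int|\phi|\beta(\cdot,\eps)\,dx$ is at most $\int|\phi|\,\|\beta(x,\cdot)\|_{L^2(\di\eps/\eps)}\,dx$. H\"older with $1/p+1/q=1$ then bounds this by $\|\phi\|_{L^q}\,C\|\nabla b\|_{L^p}$. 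The $u$-term is symmetric. For the linear part, I bound $|M_{x,\eps}|$ by the maximal function of $\nabla b$. I pair it with the square function of $u$ built from the kernel $K_\eps+\mathrm{Id}\,\delta$, whose boundedness on $L^q$ follows from standard Littlewood--Paley theory because the kernel has vanishing integral. Cauchy--Schwarz in $\eps$ and then H\"older give a bound $\|\phi\|_{L^\infty}\|\mathcal M\nabla b\|_{L^p}\|Su\|_{L^q}$.

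I expect the main obstacle to be the linear term. Pointwise in $\eps$, the product of $M_{x,\eps}$ with the square-function piece does not directly give an $L^2(\di\eps/\eps)$ bound after integrating in $x$: one must place the $L^2_\eps$ norm on the square function and use $\sup_\eps|M_{x,\eps}|\le\mathcal M(\nabla b)(x)$, with Cauchy--Schwarz applied only after taking absolute values inside the $x$-integral. This yields $\int|\phi|\,\mathcal M\nabla b\,\big(\int|K_\eps\ast u+u|^2\tfrac{\di\eps}{\eps}\big)^{1/2}$, and the triple H\"older with exponents $(\infty,p,q)$ closes it. However, Minkowski in $\eps$ outside the $x$-integral is needed first, and that requires care; this is exactly where the $L^\infty$ norms in \eqref{eq:CommutatorDorronsoroConclusion} are used. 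A secondary technicality is making $\operatorname{tr}M_{x,\eps}=0$ rigorous, which I would handle by choosing $M_{x,\eps}=(\nabla b)_{B_{2\eps}(x)}$ and invoking the version of Dorronsoro's theorem with averaged gradients.
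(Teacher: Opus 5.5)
Your architecture matches the paper's: subtract an affine approximation of $b$ at scale $\eps$, use that its linear part is trace-free, control the affine errors by Dorronsoro's theorem, and control the linear part by a maximal function of $\nabla b$ times a square function of $u$. The linear term, however, fails as written. The kernel $K+\delta_0\,\mathrm{Id}$, with $K^{ij}(z)=z_j\partial_i\rho(z)$, has vanishing integral but contains a Dirac mass, so it is not a Littlewood--Paley kernel. Its multiplier $\hat K(\eps\xi)+\mathrm{Id}$ does not decay as $\eps|\xi|\to\infty$. Consequently $\int_0^\infty|\hat K^{ii}(\eps\xi)+1|^2\,\frac{\di\eps}{\eps}=\infty$ for every $\xi\neq 0$. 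Pointwise, $K_\eps\ast u\to 0$ as $\eps\to\infty$, so $K_\eps\ast u+u\to u$ and your square function is infinite wherever $u\neq0$. Even restricted to $\eps\in(0,1)$, the squared $L^2$ norm of your square function is bounded below by $c\int_{|\xi|\geq R}\log|\xi|\,|\hat u(\xi)|^2\,\di\xi$. That is essentially $[u]_{\log}^2$, exactly the quantity that cannot be bounded by $\|u\|_{L^q}$.

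The missing idea is to use the trace-freeness of $M_{x,\eps}$ to subtract $(u\ast\psi_\eps)(x)\,\mathrm{Id}$ instead of $u(x)\,\mathrm{Id}$, with $\psi\in C^\infty_c$ and $\int\psi=1$. Then $K+\psi\,\mathrm{Id}$ is a smooth, compactly supported, mean-zero kernel, for which the square-function bound does hold. The paper chooses $\psi$ with the same annular masses as $-K^{11}$, so that Lemma~\ref{lemma:CZLemma} applies verbatim to $II(\eps)$.

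There is a second gap in the affine remainders. You centre $A_{x,\eps}$ at the same point $x$ where $\phi$ is evaluated. The term $\int\phi(x)\,(b(x)-A_{x,\eps}(x))\cdot(\nabla\rho_\eps\ast u)(x)\,\di x$ therefore involves the pointwise value $b(x)-A_{x,\eps}(x)$, which is not controlled by the ball average $\beta(x,\eps)$. The same problem affects your $\beta'$ if you put $\|\phi\|_{L^\infty}$ on the $y$-remainder, where $b(y)$ appears pointwise against a family of affine maps indexed by $x$. The paper avoids this by inserting $1=\int\varphi_\eps(x-\bar x)\,\di\bar x$ and centring the affine approximation at the auxiliary point $\bar x$. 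Then both $x$ and $y$ range over $B_{2\eps}(\bar x)$, both errors are genuine averages bounded by $\eps^{-1}\Omega^1_b(\bar x,c\eps)$, and they pair with $\MF u(\bar x)$ or $\MF\phi(\bar x)$.

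Within your scheme you could instead use $A_{x,\eps}(x)=\fint_{B_\eps(x)}A_{x,\eps}$ to get $|b(x)-A_{x,\eps}(x)|\le|b(x)-\fint_{B_\eps(x)}b|+2^d\eps\,\beta(x,\eps)$. The first piece then needs a separate square-function theorem for $\eps^{-1}\big(b-\fint_{B_\eps(\cdot)}b\big)$, namely the Alabern--Mateu--Verdera characterization of $W^{1,p}$, which Dorronsoro's theorem does not provide. Finally, your ``averaged gradient'' variant is unnecessary: the paper's $\nabla^{\app}_B b$ is already trace-free by integration by parts.
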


\begin{remark}
    Theorem~\ref{eq:CommutatorDorronsoroConclusion} also implies some propagation of regularity, 
    see Proposition~\ref{prop:PropagationOfRegularityWithDorronsoro}.
\end{remark}

While similar statements have already been obtained in \cite{LHARS24} we believe it is interesting to observe that quantitative differentiation can be applied to transport equations. We end this section by heuristic description of quantitative differentiation:  It is well-known that difference quotients of Sobolev functions converge to the gradient in $L^p$, but quantitative differentiation refines this by measuring at every point $x$ and a scale $r$, how close a function is to its best affine approximation in $B_r(x)$ and showing the sum of these deviations across dyadic scales is controlled by the $W^{1,p}$ norm. We refer the reader to Subsection~\ref{subsec:QuantDiff} for a more comprehensive overview of quantitative differentiation.

\subsection*{Acknowledgments} 
EB is supported by the ERC project MIND, Grant Agreement No.~101219635. 

GDP's research is funded by the European Research Council (ERC) through CoG   101169953 ``RISE''.\footnote{Views and opinions expressed are however those of the authors only and do not necessarily reflect those of the European Union or the European Research Council.}

MC and CJ were supported by the Swiss State Secretariat for Education, Research and Innovation (SERI) under contract number MB22.00034 through the project TENSE.  

\section{Commutators, DiPerna-Lions's and Ambrosio-Trevisan's}\label{sec:Commutators}

\subsection{DiPerna-Lions's commutator}

The strategy developed in \cite{DPL89} uses a sequence of mollifiers $\varphi_{\eps} = \eps^{-d} \varphi(\frac{x}{\eps})$ where $\varphi$ is a smooth, radial and compactly supported function. 
Throughout the manuscript, we assume $\supp \varphi \subseteq B_1$. Then the commutator takes the form
\[
 R^{\eps}(b,u)(x) = \int_{\R^d} u(y) (b(x) - b(y)) \cdot \nabla \varphi_{\eps}(x-y) \di y.
\]
After a change of variable,
\[
 R^{\eps}(b,u)(x) = \int_{\R^d} u(x + \eps h) \frac{b(x) - b(x + \eps h)}{\eps} \cdot \nabla \varphi(h) \di h.
\]
Since,
\[
 \frac{b(x + \eps h) - b(x)}{\eps} \to \nabla b (x) h \quad \text{in $L^1_{\loc}$}
\]
and $b$ is assumed to be divergence-free, the convergence of $R^{\eps}(b,u)$ to zero in $L^1$ follows.

\subsection{Ambrosio-Trevisan's commutator}\label{subsec:AT}
The strategy introduced in \cite{LADT14} follows the line of DiPerna-Lions' strategy, with a fundamental difference: Ambrosio and Trevisan use the heat semi-group to regularize solutions to \eqref{Transport equation} and exploit the PDE structure to study the commutator. 
One important outcome, is a new identity for the commutator that we are going to exploit to get log-Sobolev estimates for solutions to \eqref{Transport equation}.

Let us briefly recall very basic facts concerning the heat semi-group.
For any $u \in L^p(\setR^d)$ with $1\le p\le \infty$ we denote by
\begin{equation}\label{eq:heat convolution}
P_su(x) \coloneqq u\ast G_s(x)\quad \text{where}\quad G_s(z) \coloneqq \frac{1}{(4\pi s)^{d/2}}e^{-\frac{|z|^2}{4s}}.
\end{equation}
the heat semi-group at time $s>0$ applied to $u$. It is simple to see that
\begin{equation}\label{heat equation}
\partial_s P_su=\Delta P_su\quad \text{for any $s>0$,}\quad  \lim_{s\to 0}P_su=u
\end{equation}
where the first equation is understood in the classical sense and the limit holds for $\Leb^d$-a.e. $x\in\setR^d$ and in $L^p(\setR^d)$.
Let us also recall that $P_s$ enjoys the semi-group property
\begin{equation}\label{semigroup}
	P_{s+t}u=P_s(P_tu)
	\quad\text{for any $t,s\ge 0$},
\end{equation} 
and the regularization property
\begin{equation}\label{heat regularization}
\norm{\nabla P_sf}_{L^p} \leq \frac{C(d)}{\sqrt{s}}\norm{f}_{L^p}
\quad \text{for any}\ 1\le p\le \infty.
\end{equation}
The reader can easily check \eqref{semigroup} and \eqref{heat regularization} exploiting the identity \eqref{eq:heat convolution}.
\begin{remark}
	It is worth remarking that \eqref{heat regularization} is a very robust property that holds true in a great variety of settings, for instance Riemannian manifolds and metric measure spaces with Ricci curvature bounded below. 
\end{remark}

Similarly to \eqref{eq:CommutatorInIntro}, let $u_t$ and $b_t$ solve \eqref{Transport equation} and set $u_t^{\eps} = P_{\eps} u_t$.
Then
\begin{equation}\label{AT commutator}
\partial_t u_t^{\eps}+b_t\cdot \nabla u_t^{\eps}=b_t\cdot \nabla (P_\eps u_t)-P_{\eps}(b_t\cdot\nabla u_t) =: C^{\eps}(b_t,u_t).
\end{equation}
The new commutator $C^{\eps}(b_t,u_t)$ can be rewritten in a useful form and then easily estimated.
This is the content of the next proposition which comes from \cite{LADT14}, stated in a slightly different formalism, see \cite[Lemma 5.8]{LADT14}.

\begin{proposition}\label{prop: Ambrosio-trevisan commutator}
	Let $1\le p\le \infty$, $1\le q\le \infty$, $1\le r\le \infty$ satisfy
	$\frac{1}{p}+\frac{1}{q}+\frac{1}{r}=1$. 	
	For any $b\in W^{1,p}(\setR^d; \R^d)$ with $\div b=0$, $u\in L^q(\setR^d)$ and $\phi\in L^r(\setR^d)$ it holds
	\begin{enumerate}
		\item \label{item:RewritingOfCommutator}
		\begin{equation}\label{Commutator}
			\int_{\R^d} C^{\eps}(b,u)\phi\di x =2\int_0^{\eps}\int_{\R^d} \nabla_{\text{sym}}b \ \nabla P_s u\nabla P_{\eps-s}\phi \di x \di s;
		\end{equation}
	       \item \label{item:BoundsOnCommutator}
		\begin{equation}\label{eq:first commutator estimate}
			\abs{\int_{\R^d} C^{\eps}(b,u)\phi \di x}\leq C(d)
			\norm{\nabla_{\text{sym}}b}_{L^p}\norm{ u}_{L^q}\norm{\phi}_{L^r}.	
		\end{equation}
   \end{enumerate}
   In particular, denoting by $r'$ the conjugate of $r$ one has
   \begin{equation}\label{eq:second commutator estimate}
   	\norm{C^{\eps}(b,u)}_{L^{r'}}\leq C(d) \norm{\nabla_{\text{sym}}b}_{L^p}\norm{ u}_{L^q}.
   \end{equation}
\end{proposition}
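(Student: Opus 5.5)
The plan is to prove the identity \eqref{Commutator} by interpolating between the two terms of the commutator along the heat flow. The bound \eqref{eq:first commutator estimate} then follows from \eqref{heat regularization} and Hölder's inequality, and \eqref{eq:second commutator estimate} follows by duality.

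\textbf{Step 1: rewrite the commutator.} I first work with $b$, $u$, $\phi$ smooth and rapidly decaying, so that all integrations by parts are legitimate. Since $\div b=0$, we have $b\cdot\nabla u=\div(bu)$. Using the self-adjointness of $P_\eps$,
\begin{equation*}
\int_{\R^d} C^\eps(b,u)\phi\di x=\int_{\R^d} \phi\, b\cdot\nabla P_\eps u\di x+\int_{\R^d} u\, b\cdot\nabla P_\eps\phi\di x .
\end{equation*}
For $s\in[0,\eps]$ set
\begin{equation*}
F(s):=\int_{\R^d} P_s u\; b\cdot\nabla P_{\eps-s}\phi\di x .
\end{equation*}
Then $F(0)=\int u\, b\cdot\nabla P_\eps\phi\di x$. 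Integrating by parts and using $\div b=0$ again gives $F(\eps)=-\int \phi\, b\cdot\nabla P_\eps u\di x$. Hence
\begin{equation*}
\int_{\R^d} C^\eps(b,u)\phi\di x=F(0)-F(\eps)=-\int_0^\eps F'(s)\di s .
\end{equation*}

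\textbf{Step 2: compute $F'$.} Write $f=P_su$ and $g=P_{\eps-s}\phi$. By \eqref{heat equation},
\begin{equation*}
F'(s)=\int_{\R^d} \Delta f\, b\cdot\nabla g\di x-\int_{\R^d} f\, b\cdot\nabla\Delta g\di x .
\end{equation*}
I integrate by parts in each term. In the second term I move one derivative onto $fb$. The terms $\partial_jf\, b_i\,\partial_{ij}g$ then cancel between the two pieces. The remaining term $\int f\,\partial_jb_i\,\partial_{ij}g$ is integrated by parts once more in $x_i$; the contribution $\partial_i\partial_jb_i$ vanishes because $\div b=0$. What is left is
\begin{equation*}
F'(s)=-\int_{\R^d}\big(\partial_jf\,\partial_jb_i\,\partial_ig+\partial_if\,\partial_jb_i\,\partial_jg\big)\di x=-2\int_{\R^d}\nabla_{\text{sym}}b\,\nabla f\,\nabla g\di x .
\end{equation*}
This gives \eqref{Commutator} in the smooth case.

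\textbf{Step 3: the estimate.} By the generalized Hölder inequality and \eqref{heat regularization}, the integrand in $s$ is bounded by
\begin{equation*}
\norm{\nabla_{\text{sym}}b}_{L^p}\,\frac{C(d)}{\sqrt s}\norm u_{L^q}\,\frac{C(d)}{\sqrt{\eps-s}}\norm\phi_{L^r}.
\end{equation*}
Since $\int_0^\eps \frac{\di s}{\sqrt{s(\eps-s)}}=\pi$ independently of $\eps$, this yields \eqref{eq:first commutator estimate}. Taking the supremum over $\phi$ with $\norm\phi_{L^r}\le1$ then gives \eqref{eq:second commutator estimate}.

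\textbf{Step 4: approximation, the main technical obstacle.} It remains to pass from smooth data to general $b\in W^{1,p}$, $u\in L^q$, $\phi\in L^r$. The right-hand side of \eqref{Commutator} is continuous in each argument by the bound of Step 3. The left-hand side is also continuous: for fixed $\eps$, the quantity $\int C^\eps(b,u)\phi$ is a combination of terms like $\int \phi\, b\cdot\nabla P_\eps u$, which are continuous trilinear forms thanks to \eqref{heat regularization}. Since at most one exponent can be infinite, I will approximate the two factors with finite exponents strongly, by mollification and truncation. If the infinite exponent belongs to $u$ or $\phi$, I will use dominated convergence, or weak-$*$ convergence paired against a strongly convergent product. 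If $p=\infty$, I will approximate $b$ by mollified divergence-free fields $b\ast\rho_\delta$, which converge boundedly a.e. together with their gradients. The one point to check carefully is that each approximation preserves $\div b=0$; this holds because mollification commutes with divergence.
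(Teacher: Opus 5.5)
Your argument is correct and is essentially the paper's. Your $F(s)$ equals $-\int_{\R^d} P_{\eps-s}(b\cdot\nabla P_s u)\,\phi\di x$ (by self-adjointness and $\div b=0$), so it is the same heat-flow interpolation. Your index computation in Step 2 is a direct proof of the identity $-\frac12\int\{\Delta f\, b\cdot\nabla g+\Delta g\, b\cdot\nabla f\}\di x=\int\nabla_{\text{sym}}b\,\nabla f\,\nabla g\di x$ that the paper quotes, and the H\"older, \eqref{heat regularization} and duality steps are identical.

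Step 4, which the paper does not carry out, contains one false claim. It is not true that at most one exponent can be infinite: $(p,q,r)=(1,\infty,\infty)$, $(\infty,1,\infty)$ and $(\infty,\infty,1)$ are all admissible. In these cases only one factor can be approximated strongly.

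The limit still passes, as follows. Approximate each $L^\infty$ factor by $C^\infty_c$ functions converging a.e.\ with a uniform $L^\infty$ bound (mollify, then cut off). Then $\nabla P_s$ of the approximants converges pointwise, with the bound $C(d)s^{-1/2}\norm{\cdot}_{L^\infty}$. Apply dominated convergence on both sides, after splitting off the strongly convergent factor via the trilinear bound. The dominating function is, for example, $C\abs{\nabla_{\text{sym}}b}\,s^{-1/2}(\eps-s)^{-1/2}$ on $\R^d\times(0,\eps)$ when $p=1$.

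Also, do not truncate $b$: a cutoff destroys $\div b=0$. Truncation is unnecessary anyway. The mollified field $b\ast\rho_\delta$ is smooth with bounded derivatives of all orders, and with $u,\phi\in\mathcal S$ all boundary terms in Steps 1--2 vanish.
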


\begin{proof}
Exploiting an interpolation argument in the spirit of Bakry-Ledoux-Gentil \cite{DBIGML14} we can write
\begin{align*}
	\int_{\R^d} C^{\eps}(b,u)\phi \di x=& \int_{\R^d} (b\cdot \nabla (P_\eps u)-P_{\eps}(b\cdot \nabla u))\phi \di x\\
	=& \int_0^\eps \int_{\R^d} \frac{\di}{\di s} P_{\eps-s}(b\cdot\nabla P_su)\phi \di x\di s\\
	=& \int_0^{\eps} \int_{\R^d} \left\lbrace P_{\eps-s}(b\cdot \nabla \Delta P_s u)-\Delta P_{\eps-s}(b\cdot \nabla P_s u)\right\rbrace \phi \di x \di s
\end{align*}
where in the last passage we used \eqref{heat equation}. Using the fact that $P_{\eps}$ and $\Delta$ are self adjoint operators in $L^2(\setR^d)$ and the integration by parts formula 
\begin{equation*}
	\int_{\R^d} g\ b\cdot \nabla f\di x=-\int_{\R^d} f\ b\cdot \nabla g
	\quad \text{for any}\ f\in \mathcal{S}(\R^d),\ g\in W^{1,\infty}(\setR^d)\cap  C^{\infty}(\setR^d)
\end{equation*}
(here we have used that $\div b=0$), we end up with
\begin{align*}
  \int_{\R^d} C^{\eps}(b,u)\phi \di x=&-\int_0^{\eps} \int_{\R^d} \left\lbrace \Delta P_s u\ b\cdot \nabla P_{\eps-s}\phi+\Delta P_{\eps-s}\phi\ b\cdot \nabla P_s u \right\rbrace \di x \di s\\
  =& 2 \int_0^{\eps} \int_{\R^d} \nabla_{\text{sym}}b \ \nabla P_s u\nabla P_{\eps-s}\phi \di x \di s,
\end{align*}
where the last identity is a consequence of the well-known equality
\begin{equation*}
  -\frac{1}{2} \int_{\R^d} \left\lbrace \Delta f\ b\cdot \nabla g+\Delta g\ b\cdot \nabla f-\div b\ \nabla f\cdot  \nabla g\right\rbrace \di x =\int_{\R^d} \nabla_{\text{sym}}b\ \nabla f\nabla g\di x
\end{equation*}
that holds true for general Riemannian manifolds.

The proof of \ref{item:BoundsOnCommutator} easily follows from \ref{item:RewritingOfCommutator}	 by applying the H\"older inequality and using \eqref{heat regularization}.
We have indeed
\begin{align*}
	\abs{\int_0^{\eps} \int_{\R^d} \nabla_{\text{sym}}b \ \nabla P_s u\nabla P_{\eps-s}\phi \di x \di s }
	\le & \int_0^{\eps}\norm{\nabla_{\text{sym}}b}_{L^p}\norm{\nabla P_s u}_{L^q}\norm{\nabla P_{\eps-s}\phi}_{L^r}\di s\\
	\leq C(d) & \norm{\nabla_{\text{sym}}b}_{L^p}\norm{ u}_{L^q}\norm{\phi}_{L^r} \int_0^\eps\frac{1}{\sqrt{s}}\ \frac{1}{\sqrt{\eps-s}}\di s\\
	\leq C(d) & \norm{\nabla_{\text{sym}}b}_{L^p}\norm{ u}_{L^q}\norm{\phi}_{L^r}.
\end{align*}
Then, \eqref{eq:second commutator estimate} follows from \ref{item:BoundsOnCommutator} by duality.
\end{proof}

\begin{remark}
	One can relax the assumption $\div b=0$ to $\div b\in L^p(\setR^d)$ and obtain similar results. For the sake of simplicity we prefer to consider just the incompressible case.
\end{remark}

An approximation argument, along with Proposition~\ref{prop: Ambrosio-trevisan commutator}, allows us to prove 
\begin{equation}\label{z1}
   \lim_{\eps\to 0}\int_0^T\norm{C^{\eps}(b_t,u_t)}_{L^1}\di t=0
\end{equation}
when $u_t$ and $b_t$ satisfies \eqref{HP} and $\divergence b_t = 0$. 

Observe that, the same argument in Proposition~\ref{prop: Ambrosio-trevisan commutator} with $r=\infty$ applied to $C^{\eps}(b_t, P_{\delta}u_t)$ gives
\begin{align*}
\abs{\int_0^{\eps} \int_{\R^d} \nabla_{\text{sym}}b_t \ \nabla P_{s+\delta} u_t\nabla P_{\eps-s}\phi \di x \di s }
\le & \int_0^{\eps}\norm{\nabla_{\text{sym}}b_t}_{L^p}\norm{\nabla P_{s+\delta} u_t}_{L^q}\norm{\nabla P_{\eps-s}\phi}_{L^\infty}\di s\\
\leq C(d) & \norm{\nabla_{\text{sym}}b_t}_{L^p}\norm{ u_t}_{L^q}\norm{\phi}_{L^\infty} \int_0^\eps\frac{1}{\sqrt{s+\delta}}\ \frac{1}{\sqrt{\eps-s}}\di s\\
\leq C(d) & \norm{\nabla_{\text{sym}}b_t}_{L^p}\norm{ u_t}_{L^q}\norm{\phi}_{L^\infty} \frac{\sqrt{\eps}}{\sqrt{\delta}}
\end{align*}
and thus
\begin{equation*}
	\norm{C^{\eps}(b_t,P_{\delta}u_t)}_{L^1}\leq C(d) \frac{\sqrt{\eps}}{\sqrt{\delta}} \norm{\nabla_{\text{sym}}b_t}_{L^p}\norm{ u_t}_{L^q}.
\end{equation*}
Using \eqref{eq:second commutator estimate} with $r= \infty$ we obtain
\begin{align*}
	\int_0^T\norm{C^{\eps}(b_t,u_t)}_{L^1}\di t
	\le & \int_0^T\norm{C^{\eps}(b_t,P_{\delta}{u_t)}}_{L^1}\di t
    + \int_0^T\norm{C^{\eps}(b_t,u_t-P_{\delta}u_t)}_{L^1}\di t\\
    \leq C(d) & \frac{\sqrt{\eps}}{\sqrt{\delta}} \norm{\nabla_{\text{sym}} b}_{L^1_tL^p_x}\norm{u}_{L^{\infty}_tL^q_x}
    +\norm{\nabla_{\text{sym}} b}_{L^1_tL^p_x}\norm{u-P_{\delta } u}_{L^{\infty}_tL^q_x}.
\end{align*}
Choosing $\delta=\sqrt{\eps}$ and letting $\eps\to 0$ we get \eqref{z1}.

\section{Improved estimates and regularity via Ambrosio-Trevisan's commutator}

\subsection{Proof of Theorem~\ref{th:commutatorgeneral}}

We recall the definition and the main properties of the square function associated to the heat semi-group.

\begin{lemma}
	[Integrability of the Square function]\label{lemma:square function integrability}
	Let us set
	\begin{equation}\label{square function}
	S f \coloneqq \left( \int_0^{\infty}|\nabla P_s f|^2\di s   \right)^{1/2}\quad \text{for $f\in \mathcal{S}(\R^d)$}.
	\end{equation}
	Then we have 
	\begin{equation*}
	\norm{Sf}_{L^p}\leq C(d,p) \norm{f}_{L^p}
	\quad \text{for any }1<p<\infty.
	\end{equation*}	
\end{lemma}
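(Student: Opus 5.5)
The plan is to derive the bound from the classical Littlewood--Paley--Stein theory for the heat semigroup, in two steps: the case $p=2$ via Plancherel, then $1<p<\infty$ via vector-valued Calder\'on--Zygmund theory.

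\emph{The case $p=2$.} Writing $\widehat{\nabla P_s f}(\xi)=2\pi i\xi\, e^{-4\pi^2 s|\xi|^2}\hat f(\xi)$, Fubini and Plancherel give
\begin{equation*}
\norm{Sf}_{L^2}^2=\int_0^\infty\int_{\R^d}4\pi^2|\xi|^2e^{-8\pi^2 s|\xi|^2}|\hat f(\xi)|^2\di\xi\di s=\tfrac12\norm{f}_{L^2}^2 ,
\end{equation*}
so the bound holds with an explicit constant. Equivalently, one can integrate $\frac{\di}{\di s}\norm{P_sf}_{L^2}^2=-2\norm{\nabla P_sf}_{L^2}^2$ over $s\in(0,\infty)$, which avoids the Fourier transform.

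\emph{The case $1<p<\infty$.} View $f\mapsto (\nabla P_s f)_{s>0}$ as a convolution operator with values in the Hilbert space $H=L^2((0,\infty),\di s;\R^d)$. Its kernel is $K(x)=(\nabla G_s(x))_{s>0}$, and the previous step shows the operator is bounded $L^2\to L^2_H$. The main step is to verify the H\"ormander-type size and smoothness conditions
\begin{equation*}
\norm{K(x)}_H\le C|x|^{-d},\qquad \norm{\nabla K(x)}_H\le C|x|^{-d-1}.
\end{equation*}
For the first, use $|\nabla G_s(x)|\lesssim s^{-(d+1)/2}\frac{|x|}{\sqrt s}e^{-|x|^2/(4s)}$ and substitute $s=|x|^2\tau$, which yields $\norm{K(x)}_H^2\lesssim |x|^{-2d}\int_0^\infty \tau^{-d-2}e^{-1/(2\tau)}\di\tau<\infty$. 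The same scaling argument with one more derivative gives the second estimate.

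With these conditions in hand, the vector-valued Calder\'on--Zygmund theorem (Benedek--Calder\'on--Panzone) gives weak type $(1,1)$. Marcinkiewicz interpolation then yields boundedness for $1<p\le2$. For $2<p<\infty$, use duality: the adjoint operator $(g_s)_s\mapsto\int_0^\infty \nabla^* P_s g_s\di s$ has a kernel satisfying the same conditions, so it is bounded on $L^{p'}_H$, and the original operator is therefore bounded on $L^p$. Since $\norm{Sf}_{L^p}=\norm{(\nabla P_sf)_s}_{L^p_H}$, this completes the argument.

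The only genuine obstacle is the kernel bound in $H$-norm; it reduces to the scaling computation above. Alternatively, since the statement is classical (Stein, \emph{Topics in Harmonic Analysis}), one could simply cite it.
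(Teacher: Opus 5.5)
Your proposal is correct and follows the same route as the paper. For $p=2$ you use the exact identity $\norm{Sf}_{L^2}^2=\frac{1}{2}\norm{f}_{L^2}^2$; the paper derives it from the energy identity you list as an alternative. For $1<p<\infty$ you use $L^2$-valued Calder\'on--Zygmund theory, which the paper simply cites from Stein. Your scaling check of the $H$-valued size and smoothness bounds, followed by weak type $(1,1)$, Marcinkiewicz interpolation and duality for $p>2$, correctly supplies the details the paper leaves to the references.
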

\begin{proof}
	This result is standard in harmonic analysis and builds upon the Calder\'on-Zygmund theory, we refer to \cite[Chapter 2]{EMS70} or \cite[Sections 6.3 \& 6.4]{EMS93} for a proof.
	For sake of completeness, here we just present the estimate (actually the identity) in the particular case $p=2$ that can be performed exploiting the PDE structure.
	
	Integrating by parts and using \eqref{heat equation} we get
	\begin{align*}
	\norm{Sf}_{L^2}^2 &= \int_0^{\infty}\int_{\R^d} |\nabla P_s f|^2\di x \di s
	=-\int_0^{\infty}\int_{\R^d} P_s f\ \Delta P_s f\di x \di s\\
	&=-\int_0^{\infty}\int_{\R^d} P_s f \partial_s P_s f \di x\di s
	=-\int_0^{\infty}\frac{1}{2}\frac{\di}{\di s}\int_{\R^d} (P_s f)^2 \di x \di s\\
	&=\frac{1}{2}\norm{f}_{L^2}^2,
	\end{align*}
	where in the last step we used the following property of the heat semi-group
	\begin{equation*}
	\lim_{s\to \infty}\norm{P_s f}_{L^2}=0
	\quad \text{for any }f\in L^2(\setR^d).
	\end{equation*}
\end{proof}

\begin{proof}[Proof of Theorem~\ref{th:commutatorgeneral}]
	The starting point is the commutator identity \eqref{Commutator}:
    \begin{equation*}
    \frac{1}{2} \abs{\int_{\R^d} C^{\eps}(b,u)\phi\di x}=\abs{\int_0^{\eps} \int_{\R^d} \nabla_{\text{sym}}b \ \nabla P_s u\nabla P_{\eps-s}\phi \di x \di s}
    \le \int_0^{\eps}\int_{\R^d} |\nabla_{\text{sym}}b| |\nabla P_s u||\nabla P_{\eps-s}\phi| \di x \di s.
    \end{equation*}
    Our conclusion follows from the claim below applied to $h=|\nabla_{\text{sym}} b|$, $f = u$, $g = \phi$.
    
    \medskip
    \textbf{Claim:}
    Let $f,g,h\in \mathcal{S}(\R^d)$ then
    \begin{equation*}
    C(h,f,g) \coloneqq \int_0^{\infty} \int_0^{\eps} \int_{\R^d} |h| |\nabla P_s f||\nabla P_{\eps-s}g|\di x\di s \frac{\di \eps}{\eps}
   \leq C(d,p,q,r) \norm{h}_{L^p}\norm{f}_{L^q}\norm{g}_{L^r}
   \end{equation*}
   for any $1< p\le \infty$, $1<q<\infty$, $1<r<\infty$ satisfying $\frac{1}{p}+\frac{1}{q}+\frac{1}{r}=1$.
   \medskip
    By a change of variables, we get
	\begin{equation}\label{z3}
	C(h,f,g) \leq \int_{\R^d} |h| \int_0^{\infty}|\nabla P_sf|\int_0^{\infty}|\nabla P_{\eps} g|\frac{\di\eps}{\eps+s}\di s\di x.
	\end{equation}
	Let us now consider the linear operator
	\begin{equation*}
		H \phi (s):=\int_0^{\infty} \phi(\eps)\frac{\di \eps}{\eps+s}
		\quad \text{for any measurable function } \phi:[0,\infty)\to [0,\infty), 
	\end{equation*}
	and observe that 
	\begin{equation}\label{z2}
		\norm{H\phi}_{L^p((0,\infty))}\leq C(p) \norm{\phi}_{L^p((0,\infty))}
		\quad \text{for }1<p<\infty.
	\end{equation}
	The inequality \eqref{z2} can be checked noticing that, for $1\le p<\infty$, the H\"older inequality gives
	\begin{equation*}
		H\phi(s)\leq C(p) \norm{\phi}_{L^p} s^{-1/p}
		\implies \norm{H\phi}_{L^{p,\infty}((0,\infty))} \leq C(p) \norm{\phi}_{L^p((0,\infty))}
	\end{equation*}
	and therefore \eqref{z2} follows from the Marcinkiewicz interpolation theorem.
	Applying \eqref{z2} with $\phi(\eps)=|\nabla P_{\eps} g|$ and $p=2$ we get
	\begin{equation}\label{z4}
		\int_0^{\infty} |\nabla P_s f|\int_0^{\infty} |\nabla P_{\eps}g| \frac{\di\eps}{\eps+s}\di s
		\leq C \left( \int_0^{\infty}|\nabla P_s f|^2\di s   \right)^{1/2}\left( \int_0^{\infty}|\nabla P_s g|^2\di s   \right)^{1/2} = C Sf Sg,
	\end{equation}
   where $Sf$ denotes the square functions already introduced in Lemma~\ref{lemma:square function integrability}.
   We can now easily conclude using the H\"older inequality and Lemma~\ref{lemma:square function integrability}
   \begin{equation*}
		C(f,g,h) \leq C \int_{\R^d} |h| Sf Sg \di x
		\leq C \norm{h}_{L^p}\norm{Sf}_{L^q}\norm{Sg}_{L^r}
		\leq C(d,p,q,r) \norm{h}_{L^p}\norm{f}_{L^q}\norm{g}_{L^r}.
	\end{equation*}
\end{proof}

\begin{remark}\label{remark:variation commutator estimate}
	Following the proof of Theorem~\ref{th:commutatorgeneral} one can easily show 
	\begin{equation*}
	\int_0^1 \abs{\int_{\R^d} P_{\eps} C^{\eps}(b,u)\phi \di x}\frac{\di \eps}{\eps}
	\leq C(d,p,q,r) \norm{\nabla_{\text{sym}}b}_{L^p}\norm{u}_{L^q}\norm{\phi}_{L^r}.
	\end{equation*}
	This variant of \eqref{eq:main} will play a role in the sequel.	
\end{remark}

\subsection{Proof of Theorem~\ref{thm:LogRegularityViaHeatFlow}}

As we have anticipated, the stronger commutator estimate in Theorem~\ref{th:commutatorgeneral} gives a new proof of the propagation of ``log-Sobolev'' regularity for solutions of \eqref{Transport equation}. 

Let us start by introducing a semi-norm aiming to measure the ``derivative of logarithmic order'' of a given function
\begin{equation}\label{eq:log-seminorm}
[u]_{\log}^2 \coloneqq \int_0^1 \norm{P_{\eps}u-u}_{L^2}^2\frac{\di \eps}{\eps}.
\end{equation}
In the next lemma we will see that \eqref{eq:log-seminorm} is actually equivalent to the semi-norm proposed in \cite{EBQHN21} and \cite{FL18}.

\begin{lemma}
	Let $u\in \mathcal{S}(\R^d)$. We have
	\begin{equation}\label{eq:equivalence norms 1}
	\frac{1}{C} \int_{|\xi|\ge 1} \log |\xi| |\hat u(\xi)|^2\di \xi
	\leq [u]_{\log}^2
	\leq C \left( \int_{|\xi|\ge 1} \log |\xi| |\hat u(\xi)|^2\di \xi + \int_{|\xi|\le 1}|\hat u(\xi)|^2\di \xi \right)
	\end{equation}
	In particular, up to a dimensional constant $C(d)$, we have the equivalence between
	\begin{equation}\label{eq:equivalence norms 2}
	\norm{u}_{L^2}^2+[u]_{\log}^2 \quad \text{and} \quad
	 \norm{u}_{L^2}^2+\int_{B_{1/3}}\int_{\setR^d}\frac{|u(x+h)-u(x)|^2}{|h|^d}\di x\di h.
	\end{equation}	
\end{lemma}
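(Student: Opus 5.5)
We prove the lemma on the Fourier side using Plancherel. With the convention $\widehat{P_\eps u}(\xi)=e^{-4\pi^2\eps|\xi|^2}\hat u(\xi)$ (the constant $4\pi^2$ depends on the normalization of the Fourier transform and only affects constants), Fubini gives
\begin{equation*}
[u]_{\log}^2=\int_{\setR^d}|\hat u(\xi)|^2\, m(\xi)\di \xi,\qquad m(\xi)\coloneqq\int_0^1 \bigl(1-e^{-c\eps|\xi|^2}\bigr)^2\frac{\di\eps}{\eps},\quad c=4\pi^2 .
\end{equation*}
Everything then reduces to two-sided bounds on the multiplier $m$. Substituting $t=c\eps|\xi|^2$ gives $m(\xi)=\int_0^{c|\xi|^2}(1-e^{-t})^2\frac{\di t}{t}$.

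For $|\xi|\le 1$ we use $(1-e^{-t})^2\le t^2$ to get $m(\xi)\le \int_0^{c} t\di t\le C$. This yields the low-frequency term $\int_{|\xi|\le1}|\hat u|^2$ in the upper bound, and there is nothing to bound from below there.

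For $|\xi|\ge1$ we split the $t$-integral at $t=1$. The piece over $[0,1]$ lies between two absolute constants. On $[1,c|\xi|^2]$ we have $(1-e^{-1})^2\le(1-e^{-t})^2\le1$, so this piece is comparable to $\log(c|\xi|^2)=2\log|\xi|+\log c$. Hence $m(\xi)\le C(1+\log|\xi|)$. The additive $1$ is harmless only because the upper bound contains a full $L^2$-type term, so we should check that it is absorbed. For $|\xi|\ge 2$, say, we have $1\le C\log|\xi|$. The annulus $1\le|\xi|\le 2$ is not covered by the statement's right-hand side as written. There we either enlarge the low-frequency region to $|\xi|\le 2$, which does not change the argument, or note that in the equivalence \eqref{eq:equivalence norms 2} the full $\norm{u}_{L^2}^2$ absorbs it. Conversely, since $c=4\pi^2>1$, the lower bound $m(\xi)\ge (1-e^{-1})^2\log(c|\xi|^2)\ge c'\log|\xi|$ holds for all $|\xi|\ge1$. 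This gives the left inequality in \eqref{eq:equivalence norms 1}.

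For \eqref{eq:equivalence norms 2}, we compute the difference-quotient seminorm by Plancherel:
\begin{equation*}
\int_{B_{1/3}}\int_{\setR^d}\frac{|u(x+h)-u(x)|^2}{|h|^d}\di x\di h=\int_{\setR^d}|\hat u(\xi)|^2\, n(\xi)\di\xi,\qquad n(\xi)\coloneqq\int_{B_{1/3}}\frac{|e^{2\pi i h\cdot\xi}-1|^2}{|h|^d}\di h .
\end{equation*}
We need $n(\xi)\le C$ for $|\xi|\le1$, which follows from $|e^{i\theta}-1|\le|\theta|$. We also need $n(\xi)\approx \log|\xi|$ for $|\xi|$ large. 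This comes from the rotation invariance of $n$ and the scaling $h\mapsto h/|\xi|$: the region $|h|\le 1/|\xi|$ contributes $O(1)$, while on the shells $1/|\xi|\le|h|\le1/3$ the spherical average of $|e^{2\pi i h\cdot\xi}-1|^2=2-2\cos(2\pi h\cdot\xi)$ equals $2-2\,\widehat{\sigma}(|h||\xi|)$. Since $\widehat\sigma(\rho)\to0$ as $\rho\to\infty$, this average is bounded above and below by constants once $|h||\xi|$ is large. The intermediate range, where $|h||\xi|$ is of order one but $\widehat\sigma$ is not yet small, contributes only $O(1)$. Integrating the shell average against $\di|h|/|h|$ therefore gives $\log|\xi|+O(1)$.

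Both $\norm{u}_{L^2}^2+[u]_{\log}^2$ and $\norm{u}_{L^2}^2+(\text{difference seminorm})$ are thus comparable to $\int|\hat u|^2(1+\log(2+|\xi|))\di\xi$, which proves the equivalence.

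The main obstacle is the lower bound for $n$ at large $|\xi|$, specifically controlling the averages of $\cos(2\pi h\cdot\xi)$ uniformly. The cleanest route is the decay of the Fourier transform $\widehat\sigma$ of the surface measure on the sphere. When $d=1$, $\widehat\sigma(\rho)=\cos(2\pi\rho)$ does not decay, and one instead averages the oscillating term in $|h|$ directly, using that $\int^{M}\cos(2\pi\rho)\frac{\di\rho}{\rho}$ stays bounded. Everything else is elementary.
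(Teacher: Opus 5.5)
Your proof is correct. For \eqref{eq:equivalence norms 1} it follows the paper's argument: Plancherel, the substitution $t=4\pi^2\eps|\xi|^2$, and two-sided bounds on the multiplier.

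Your caveat about frequencies just above $|\xi|=1$ is justified, and the problem is worse than ``not covered''. For $|\xi|\ge1$ one has $m(\xi)\ge\int_0^{4\pi^2}(1-e^{-t})^2\frac{\di t}{t}>0$, while $\log|\xi|\to0$ as $|\xi|\downarrow1$. So taking $\hat u$ supported in $\{1<|\xi|<1+\delta\}$ shows that the right inequality fails as literally stated. The paper's ``simple inequality'' carries a $+1$ on all of $\R^d$. What the paper actually proves is therefore your corrected version, with $\norm{u}_{L^2}^2$ (or $\int_{|\xi|\le2}|\hat u|^2$) in the last term, and that is all \eqref{eq:equivalence norms 2} needs.

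The real difference is in \eqref{eq:equivalence norms 2}. The paper quotes \cite[Lemma 3.1]{FL18} for the comparability of $\norm{u}_{L^2}^2+\int_{|\xi|\ge1}\log|\xi|\,|\hat u|^2\di\xi$ with the difference-quotient quantity. You prove it directly: you write the latter as a multiplier $n(\xi)$ and reduce, through polar coordinates, to $\int_0^{|\xi|/3}(2-2\widehat\sigma(\rho))\frac{\di\rho}{\rho}$. You then use decay of $\widehat\sigma$ when $d\ge2$, and boundedness of $\int_1^M\cos(2\pi\rho)\frac{\di\rho}{\rho}$ when $d=1$.

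The citation is shorter. Your route is self-contained and makes the equivalence transparent, since both quantities become Fourier multipliers comparable to $1+\log(2+|\xi|)$, with explicitly $d$-dependent constants. One simplification: for $d\ge2$ the lower bound on $n$ only needs $2-2\widehat\sigma\ge0$ everywhere and $2-2\widehat\sigma\ge1$ for $\rho\ge\rho_0(d)$, so the intermediate range needs no separate estimate.
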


\begin{proof}
	Using the Plancherel formula and the identity $\widehat{ P_tu}(\xi)=e^{-4 \pi^2 |\xi|^2 t}\hat u(\xi)$ we obtain
	\begin{align*}
	[u]^2_{\log}=& \int_0^1\int_{\R^d} |\hat u(\xi)|^2(1-e^{-4 \pi^2\eps|\xi|^2})^2\di \xi \frac{\di \eps}{\eps}\\
	=&\int_{\R^d} |\hat u(\xi)|^2\int_0^1 (1-e^{-4 \pi^2\eps|\xi|^2})^2\frac{\di \eps}{\eps} \di \xi\\
	=&\int_{\R^d} |u \hat(\xi)|^2\int_0^{4 \pi^2 |\xi|^2}(1-e^{-s})^2 \frac{\di s}{s}\di \xi
	\end{align*}
	where in the last passage we changed variables according to $4 \pi^2 |\xi|^2 \eps = s$. In order to conclude one has to check the simple inequalities
	\begin{equation*}
	\frac{1}{C} \chi_{\{|\xi|\ge 1\}} \log |\xi| \leq	\int_0^{|\xi|^2}(1-e^{-s})^2 \frac{\di s}{s}
	 \leq C ( \chi_{\{|\xi|\ge 1\}}\log |\xi|+1 ).
	\end{equation*}
	
	The proof of \eqref{eq:equivalence norms 2} follows from \eqref{eq:equivalence norms 1} and the equivalence, up to a dimensional constant $C(d)$ of
	\begin{equation*}
	\norm{u}_{L^2}^2+\int_{|\xi|\ge 1}\log |\xi||\hat u(\xi)|^2\di \xi \text{ and } \norm{u}_{L^2}^2+\int_{B_{1/3}}\int_{\setR^d}\frac{|u(x+h)-u(x)|^2}{|h|^d}\di x\di h
	\end{equation*}
	that has been proved in \cite[Lemma 3.1]{FL18}.
\end{proof}

\begin{proof}[Proof of Theorem~\ref{thm:LogRegularityViaHeatFlow}]
	Let us assume, without loss of generality, that $u_0\in \mathcal{S}(\R^d)$ and $b_t\in C^{\infty}_c(\setR^d;\setR^d)$ uniformly in time. Applying classical results we have that $u_t \in \mathcal{S}(\R^d)$ for any $t\in [0,T]$. Recalling \eqref{AT commutator} we get
	\begin{equation}\label{z6}
	\partial_t \frac{1}{2}|P_\eps u_t-u_t|^2+ b_t\cdot\nabla  \frac{1}{2}|P_{\eps}u_t-u_t|^2=C^{\eps}(b_t,u_t)\cdot (P_{\eps}u_t-u_t).
	\end{equation}
	Integrating \eqref{z6} in space and time we obtain
	\begin{align}
    \begin{split}\label{eq:EnergyBalancePlusCommutator}
	\norm{P_\eps u_t-u_t}_{L^2}^2&-\norm{P_\eps u_0-u_0}_{L^2}^2\\
	=& 2 \int_0^t \int_{\R^d} C^{\eps}(b_s,u_s)\cdot (P_{\eps}u_s-u_s)\di x \di s\\
	\leq & 2 \int_0^T\abs{\int_{\R^d}  P_\eps C^{\eps}(b_s,u_s) u_s\di x}\di s+ 2 \int_0^T \abs{\int_{\R^d} C^{\eps}(b_s,u_s) u_s\di x}\di s
    \end{split}
	\end{align}
	that leads to
	\begin{equation}
	[u_t]_{\log}^2\le [u_t]_{\log}^2
	+2 \int_0^T \int_0^1 \abs{\int_{\R^d}  P_\eps C^{\eps}(b_s,u_s)\ u_s\di x}\frac{\di\eps}{\eps}\di s
	+ 2 \int_0^T \int_0^1 \abs{\int_{\R^d} C^{\eps}(b_s,u_s)\ u_s\di x}\frac{\di\eps}{\eps}\di s.
	\end{equation}
	We eventually apply Theorem~\ref{th:commutatorgeneral} and Remark~\ref{remark:variation commutator estimate} to conclude the proof.
\end{proof}

\section{Improved estimates via quantitative differentiation}

\subsection{Quantitative differentiation}\label{subsec:QuantDiff}

In 1985, Dorronsoro \cite{JRD85} introduced quantitative differentiation which roughly states that asymptotically and quantitatively the best affine approximation of a Sobolev function over a ball converges to the gradient as the radius of the ball vanishes. In practice, the implementation consists of a multiscale $\beta$-number approach reminiscent of Jones' $\beta$-numbers \cite{PJ90}.
Since the work of Dorronsoro, quantitative differentiation has been further developed in multiple directions, see for instance \cite{GDSS91, JAXT15}.

\begin{definition}[{\cite[p. 22]{JRD85}}]
 Let $f \in L^1_{\loc}(\R^d)$ and $B \subseteq \R^d$ a ball.
 We call $P_B^k f \colon \R^d \to \R$ the unique polynomial of degree $k$ such that
 \[
  \int_B \left( f(y) - P^k_B f(y) \right) y^{\gamma} \di y = 0
 \]
 for all $n$-tuples $\gamma = (\gamma_1, \ldots, \gamma_d)$ with $|\gamma| = \gamma_1 + \ldots + \gamma_d \leq k$.
 Additionally, we define $\Omega_f^k \colon \R^d \times \R \to \R$ as
 \[
  \Omega_f^k(x,t) = \sup \left\{ \fint_B |f - P_B^k f| \di z : x \in B, |B| = t^d \right\}.
 \]
\end{definition}

\begin{theorem}[{\cite[Theorem 2]{JRD85}}]\label{thm:QuantDiffDorronsoro}
 Let $f \in L^p(\R^d)$ for some $1 < p < \infty$. Then $f \in W^{k, p}(\R^d)$ with $k \in \N_{\geq 1}$ if and only if the function $G_k f \colon \R^d \to \R$ defined by
 \[
  G_k f (x) \coloneqq \left( \int_0^{\infty} (t^{-k} \Omega_f^k(x,t))^2 t^{-1} \di t \right)^{1/2}
 \]
 belongs to $L^p(\R^d)$. Furthermore, it then holds that
 \[
  C(d,k,p)^{-1} \| f \|_{W^{k, p}} \leq \| f \|_{L^p} + \| G_k f \|_{L^p} \leq C(d,k,p) \| f \|_{W^{k, p}}.
 \]
\end{theorem}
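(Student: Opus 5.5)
The plan is to prove the two inequalities separately. Both go through Littlewood--Paley theory, together with the vector-valued Fefferman--Stein maximal inequality. The basic tool is the quasi-optimality of $P^k_B$: since $P^k_B$ is the $L^2(B)$-orthogonal projection onto polynomials of degree $\le k$, a scaling argument gives $\fint_B |f-P^k_Bf|\le C(d,k)\inf_{Q}\fint_B|f-Q|$, where the infimum runs over polynomials $Q$ of degree $\le k$. I will use this freely.

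\emph{Upper bound} ($f\in W^{k,p}\Rightarrow G_kf\in L^p$). Write $f=\sum_j \Delta_j f$ with frequency pieces $\Delta_j f$ localized at $|\xi|\sim 2^j$, and fix a ball $B\ni x$ with $|B|=t^d$, $t\sim 2^{-m}$. For $j<m$, I take $Q$ to be the degree $k$ Taylor polynomial of $\Delta_j f$ at the center of $B$. Bernstein's inequality then gives the bound $C\,t^{k+1}2^{j}\,M(\nabla^k\Delta_j f)(x)$ on $B$. For $j\ge m$, I take $Q=0$ and use $|\Delta_j f|\lesssim 2^{-jk}M(\tilde\Delta_j\nabla^k f)$. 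Altogether,
\[
t^{-k}\Omega^k_f(x,t)\le C\sum_j 2^{-|j-m|}\,M(\tilde \Delta_j\nabla^k f)(x).
\]
Schur's test in $\ell^2(m)$ bounds $G_kf(x)$ by $C\big(\sum_j |M(\tilde\Delta_j\nabla^kf)|^2\big)^{1/2}$. The Fefferman--Stein inequality and the Littlewood--Paley inequality for $\nabla^kf\in L^p$ then give $\|G_kf\|_{L^p}\le C\|\nabla^kf\|_{L^p}$.

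\emph{Lower bound.} Choose a radial $\psi\in C^\infty_c(B_1)$ with $\int\psi=0$ satisfying the Calder\'on reproducing identity $\int_0^\infty \widehat{\psi}(t\xi)^2\,\frac{dt}{t}=1$. For $|\alpha|=k$ we have
\[
\psi_t*\partial^\alpha f=t^{-k}(\partial^\alpha\psi)_t*f=t^{-k}(\partial^\alpha\psi)_t*(f-P^k_Bf),
\]
where $B=B_t(x)$. The last equality holds because $\partial^\alpha\psi$ annihilates polynomials of degree $\le k$: for degree $<k$ by integration by parts, and for the top-degree terms because $\int\psi=0$. This yields the pointwise bound $|\psi_t*\partial^\alpha f(x)|\le C\,t^{-k}\Omega^k_f(x,ct)$. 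Testing $\partial^\alpha f$ against $g\in L^{p'}$ through the reproducing formula, and applying the Cauchy--Schwarz inequality in $t$ and the square-function bound for $g$ in $L^{p'}$ (the analogue of Lemma~\ref{lemma:square function integrability}), gives $\|\nabla^kf\|_{L^p}\le C\|G_kf\|_{L^p}$, after the change of variables $ct\mapsto t$.

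\emph{Main obstacle.} I expect the main difficulty to be the qualitative step in the lower bound. There $f$ is only known to lie in $L^p$ with $G_kf\in L^p$, and the distributional manipulations above must be justified. I would mollify, $f_\delta=\rho_\delta*f$. Since $P^k$ commutes with translations and averaging, $\Omega^k_{f_\delta}(x,t)\le C\,\rho_\delta*\Omega^k_f(\cdot,t)(x)$, and Minkowski's inequality gives $\|G_kf_\delta\|_{L^p}\le C\|G_kf\|_{L^p}$. The estimate for the smooth $f_\delta$ is then uniform in $\delta$, and weak compactness yields $f\in W^{k,p}$. One further point needs care: Bernstein's inequality on the ball for the low-frequency Taylor remainder, where the nonlocality of $\Delta_jf$ is absorbed by the maximal function $M$.
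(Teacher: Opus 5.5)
Your argument is correct. There is no proof in the paper to compare it with: the theorem is quoted from \cite{JRD85} as a black box. The only part ever used is the upper bound for $k=1$, namely $\|G_1 b\|_{L^p}\le C\|b\|_{W^{1,p}}$ in \eqref{eq:FullFirstTerm} and \eqref{eq:FullThirdTerm}. Your proposal replaces the citation with a self-contained Littlewood--Paley proof. The half the paper needs is the easy half of your argument: Taylor expansion of the low frequencies, the competitor $Q=0$ for the high ones, Schur in $\ell^2$, and Fefferman--Stein. It uses neither the reproducing formula nor any qualitative step. The key points all hold:
\begin{itemize}
\item $\fint_B|f-P^k_Bf|\le C\inf_Q\fint_B|f-Q|$, because $\sup_B|P^k_Bg|\le C(d,k)\fint_B|g|$ by scaling;
\item for $j<m$, $\sup_B|\nabla^{k+1}\Delta_jf|\le C\,2^j\MF(\nabla^k\Delta_jf)(x)$, since $\diam B\lesssim 2^{-j}$;
\item $\int\partial^\alpha\psi(z)\,z^\gamma\di z=0$ for all $|\gamma|\le k$ when $|\alpha|=k$ and $\int\psi=0$;
\item $\Omega^k_{f_\delta}(x,t)\le(\rho_\delta*\Omega^k_f(\cdot,t))(x)$ with constant $1$, because $P^k_B(f(\cdot-h))=(P^k_{B-h}f)(\cdot-h)$.
\end{itemize}

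A few things to tighten.
\begin{itemize}
\item For $j\ge m$ the ball has radius larger than $2^{-j}$. Averaging the pointwise bound $|\Delta_jf|\lesssim 2^{-jk}\MF(\tilde\Delta_j\nabla^kf)$ over $B$ therefore gives $\MF\MF(\tilde\Delta_j\nabla^kf)(x)$, not a single maximal function. Either apply Fefferman--Stein twice, or estimate the averaged Schwartz kernel directly.
\item The lower bound only controls $\nabla^kf$. For the full $\|f\|_{W^{k,p}}$ you still need $\|\nabla^jf\|_{L^p}\le C(\|f\|_{L^p}+\|\nabla^kf\|_{L^p})$ for $0<j<k$.
\item The step you call the main obstacle is milder than you expect. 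Since $\partial^\alpha(\psi_t*f)=t^{-k}(\partial^\alpha\psi)_t*f$ for every $f\in L^p$, you can run the duality argument directly on the distributional derivative tested against $g\in\mathcal S(\R^d)$, with no mollification. Your mollification route also works.
\item The $L^{p'}$ square-function bound you need is the classical one for mean-zero $\psi$. Do not try to extract it from Lemma~\ref{lemma:CZLemma}: its annulus condition forces a radial $\psi$ to vanish identically.
\end{itemize}
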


\begin{remark}\label{rmk:ExplicitExpressionDorronsoro}
 The polynomial $P^1_B f$ can be determined explicitly and is given by
 \begin{equation}
  P^1_B f (y) = \fint_B f \, dx + \nabla^{\app}_B f \cdot (y - x_B)
 \end{equation}
 where $x_B$ is the center of $B$ and
 \begin{equation}\label{eq:ApproximateGradientScalarMap}
  \nabla^{\app}_B f \coloneqq \dfrac{C(d)}{\diam(B)^2} \fint_B f(x) (x- x_B) \di x.
 \end{equation}
\end{remark}

\begin{lemma}\label{lemma:EstimateOnApproxGradient}
 For any $f \in W^{1,1}_{\loc}(\R^d)$ and any ball $B$ 
 \begin{equation}\label{eq:EstimateOnApproxGradient}
     |\nabla^{\app}_B f| \leq C(d) \MF | \MF |\nabla f| |(x_B). 
 \end{equation}
\end{lemma}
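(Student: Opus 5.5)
We bound $|\nabla^{\app}_B f|$ by first subtracting a constant, then comparing that constant with a Poincaré average on a ball that contains $B$ and is centred at $x_B$. Write $B = B_\rho(x_B)$, so $\diam(B) = 2\rho$.

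\textbf{Step 1: mean zero.} Since $\int_B (x - x_B)\di x = 0$, formula \eqref{eq:ApproximateGradientScalarMap} gives
\[
\nabla^{\app}_B f = \frac{C(d)}{\diam(B)^2}\fint_B \bigl(f(x) - f_B\bigr)(x - x_B)\di x, \qquad f_B \coloneqq \fint_B f.
\]
Using $|x - x_B| \le \rho$ on $B$, we get
\[
|\nabla^{\app}_B f| \le \frac{C(d)}{\rho}\fint_B |f - f_B|\di x.
\]

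\textbf{Step 2: pointwise Poincaré bound.} Apply the standard potential estimate on the ball $B$: for a.e. $x \in B$,
\[
|f(x) - f_B| \le C(d)\int_B \frac{|\nabla f(z)|}{|x - z|^{d-1}}\di z.
\]

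\textbf{Step 3: average over $B$.} Averaging in $x$ and using Fubini,
\[
\fint_B |f - f_B|\di x \le C(d)\int_B |\nabla f(z)| \Bigl(\fint_B \frac{\di x}{|x - z|^{d-1}}\Bigr)\di z.
\]
The inner average is at most $C(d)\rho^{1-d}$. Hence
\[
\fint_B |f - f_B|\di x \le C(d)\,\rho^{1-d}\int_B |\nabla f|\di z = C(d)\,\rho \fint_B |\nabla f|\di z.
\]
Combining with Step 1 gives $|\nabla^{\app}_B f| \le C(d)\fint_{B_\rho(x_B)} |\nabla f|$.

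\textbf{Step 4: maximal functions.} Because the ball is centred at $x_B$, this average is at most $\MF|\nabla f|(x_B)$. Since $\MF|\nabla f|$ is a nonnegative function with $\MF|\nabla f| \le \MF\MF|\nabla f|$ pointwise (Lebesgue points), and using the stated form with absolute values, we obtain
\[
|\nabla^{\app}_B f| \le C(d)\,\MF|\nabla f|(x_B) \le C(d)\,\MF\bigl|\MF|\nabla f|\bigr|(x_B),
\]
which is \eqref{eq:EstimateOnApproxGradient}. The double maximal function is a convenient weaker form, presumably for later use with non-centred balls. One could also bound $\fint_B|\nabla f| \le C(d)\,\MF|\nabla f|(y)$ for every $y\in B$ and then average in $y$ over $B$, which gives the double maximal function at $x_B$ directly.

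\textbf{Main obstacle.} The only delicate step is the Poincaré bound in Steps 2–3, which is standard for $W^{1,1}_{\loc}$. If pointwise values are a concern, prove it first for smooth $f$ by mollification and pass to the limit, since both sides are continuous under $W^{1,1}_{\loc}$ convergence on a fixed ball.
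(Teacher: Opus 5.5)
Your proof is correct, but after the common first step it follows a different route from the paper. Both arguments use $\int_B (x-x_B)\di x=0$ to subtract a constant; the paper subtracts $f(y)$ for a.e.\ $y\in B$ rather than $f_B$. The paper then quotes the pointwise Haj\l{}asz-type inequality $|f(x)-f(y)|\le C(d)\bigl(\MF|\nabla f|(x)+\MF|\nabla f|(y)\bigr)|x-y|$ and averages first over $x$ and then over $y$ in $B$; each average produces $\MF\MF|\nabla f|(x_B)$. You use only the $L^1$ Poincar\'e inequality on $B$ and obtain the sharper bound $|\nabla^{\app}_B f|\le C(d)\fint_B|\nabla f|\le C(d)\MF|\nabla f|(x_B)$, so in your argument the double maximal function is just a weakening.

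Your route is more elementary, since the pointwise inequality the paper invokes is itself proved from the same potential estimate plus a dyadic summation. It would also let one take $g=\MF\MF|\nabla b|$ instead of $\MF\MF\MF|\nabla b|$ in the proof of Theorem~\ref{thm:CommutatorDorronsoro}, although that is immaterial there because $\MF$ is bounded on $L^p$. The paper's route is shorter once that inequality is taken for granted.

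One small imprecision: in Step 4, Lebesgue points give $\MF|\nabla f|\le\MF\MF|\nabla f|$ only almost everywhere, whereas you need it at the specific point $x_B$. The correct justification is that $h=\MF|\nabla f|$ is lower semicontinuous, being a supremum of the continuous maps $x\mapsto\fint_{B_r(x)}|\nabla f|$, hence $h\le\MF h$ everywhere. Alternatively, your closing remark settles it directly: average $\fint_B|\nabla f|\le 2^d\MF|\nabla f|(y)$ over $y\in B$.
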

\begin{proof}
 By Equation~\eqref{eq:ApproximateGradientScalarMap}, for a.e. $y \in B$
  \begin{equation}
  \nabla^{\app}_B f = \dfrac{C(d)}{\diam(B)^{2+d}} \int_B ( f(x) - f(y) ) (x- x_B) \di x.
 \end{equation}
 Therefore,
 \begin{equation}
  | \nabla^{\app}_B f | \leq \dfrac{C(d)}{\diam(B)^{1+d}} \int_B | f(x) - f(y) | \di x.
 \end{equation}
 Using $|f(x) - f(y)| \leq C(d) (\MF |\nabla f|(x) + \MF |\nabla f|(y)) |x-y|$ for a.e. $x,y \in \R^d$, we deduce
 \begin{equation}
  | \nabla^{\app}_B f | \leq C(d) \fint_B (\MF |\nabla f|(x) + \MF |\nabla f|(y)) \di x \leq C(d) \Big( \MF | \MF |\nabla f| | (x_B) + \MF |\nabla f|(y) \Big).
 \end{equation}
 Integrating with respect to $y$ gives Equation~\eqref{eq:EstimateOnApproxGradient}.
\end{proof}

\begin{remark}
 From Remark~\ref{rmk:ExplicitExpressionDorronsoro}, it follows that for vector-valued maps $b \colon \R^d \to \R^d$,
 \begin{equation}\label{eq:ApproximateGradientVectorMap}
  \nabla_B^{\app} b = \frac{C(d)}{\diam(B)^2} \fint_B b(x) \otimes (x - x_B) \di x.
 \end{equation}
\end{remark}

\begin{lemma}
 If $b \in W^{1,1}_{\loc}(\R^d; \R^d)$ is divergence-free, then $\Tr(\nabla_B^{\app} b) = 0$ for all balls $B$.
\end{lemma}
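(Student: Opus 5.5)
The plan is to compute the trace directly from the explicit formula \eqref{eq:ApproximateGradientVectorMap}, reduce it to a statement about $\div b$ integrated against a test function, and use $\div b=0$. By \eqref{eq:ApproximateGradientVectorMap},
\[
\Tr(\nabla_B^{\app} b) = \frac{C(d)}{\diam(B)^2}\fint_B b(x)\cdot(x-x_B)\di x .
\]
The goal is to show that $\int_B b(x)\cdot (x-x_B)\di x = 0$. Write $x-x_B = \nabla \psi(x)$ with
\[
\psi(x)=\tfrac12\bigl(|x-x_B|^2-R^2\bigr),
\]
where $R$ is the radius of $B$. This $\psi$ is smooth and vanishes exactly on $\partial B$.

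Next I would integrate by parts on $B$:
\[
\int_B b\cdot\nabla\psi\di x = -\int_B \psi\,\div b\di x + \int_{\partial B}\psi\, b\cdot\nu \di\mathcal H^{d-1}.
\]
The boundary term vanishes because $\psi=0$ on $\partial B$, and the bulk term vanishes because $\div b=0$. This gives $\Tr(\nabla_B^{\app} b)=0$.

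The only delicate point is justifying the integration by parts for $b\in W^{1,1}_{\loc}$, where traces on $\partial B$ need some care. To avoid traces entirely, I would instead use the weak definition of the divergence with the Lipschitz, compactly supported test function
\[
\eta=\psi\,\chi_B=-\tfrac12\bigl(R^2-|x-x_B|^2\bigr)_+ .
\]
Its weak gradient is $(x-x_B)\chi_B$. Hence
\[
0=\int \eta\,\div b\di x = -\int b\cdot\nabla\eta\di x = -\int_B b\cdot(x-x_B)\di x .
\]
Using Lipschitz test functions is legitimate since $b\in W^{1,1}_{\loc}$: the identity holds for smooth test functions and passes to Lipschitz ones by mollifying $\eta$, because $\nabla(\rho_\delta*\eta)\to\nabla\eta$ boundedly a.e. with uniform compact support, so dominated convergence applies. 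Alternatively, one can mollify $b$ itself, since $\div(\rho_\delta*b)=0$ and both sides are continuous under $L^1_{\loc}$ convergence.
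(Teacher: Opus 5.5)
Your proof is correct and follows essentially the same route as the paper: write $x-x_B$ as the gradient of a radial quadratic that is constant on $\partial B$, integrate by parts over $B$, and use $\div b=0$. Your Lipschitz test function $\eta=-\tfrac12(R^2-|x-x_B|^2)_+$ is a clean way to make the integration by parts rigorous for $b\in W^{1,1}_{\loc}$ without invoking traces, a point the paper leaves implicit.
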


\begin{proof}
 By Equation~\eqref{eq:ApproximateGradientVectorMap}, 
  \begin{equation}
  \Tr(\nabla_B^{\app} b) = \frac{C(d)}{\diam(B)^2} \fint_B b(x) \cdot \nabla \left( \frac{|x - x_B|^2}{2} \right) \di x.
 \end{equation}
 Integrating by parts, using $\divergence b \equiv 0$ and $\frac{|x-x_B|^2}{2} = \frac{\diam(B)^2}{8}$ on $\partial B$, we deduce $\Tr(\nabla_B^{\app} b) = 0$.
\end{proof}

\subsection{Proof of Theorem~\ref{thm:CommutatorDorronsoro}}
We start with an attempt to prove Theorem~\ref{thm:CommutatorDorronsoro} which shows the main ideas and the required refinements to obtain a proof. The quantity of interest is rewritten as
\begin{equation}
 \int_{\R^d} R^{\eps}(b, u) \phi \di x = \int_{\R^d} \int_{\R^d} \phi(x) u(y) (b(x) - b(y)) \cdot \nabla \varphi_{\eps}(x-y) \di y \di x.
\end{equation}
By adding and subtracting $P_{B_{\eps}(y)}b (x)$ from $b(x) - b(y)$, quantitative differentiation can be used to estimate part of the expression. The remaining expression is
\begin{equation}
  \int_{\R^d} \int_{\R^d} \phi(x) u(y) \left(P_{B_{\eps}(y)}b(x) - b(y) \right) \cdot \nabla \varphi_{\eps}(x-y) \di y \di x.
\end{equation}
Clearly, it is not straightforward to estimate this term using quantitative differentiation.
However, one could add and subtract $P_{B_{\eps}(x)}b (y)$ and use quantitative differentiation again. This leads to an additional term containing $P_{B_{\eps}(y)}b(x) - P_{B_{\eps}(x)}b(y)$. This is one of the ideas of the proof of Theorem~\ref{thm:CommutatorDorronsoro}. To obtain a simple expression for the last additional term, we introduce a new variable $\bar{x}$ which we use as center for the affine approximations when using quantitative differentiation, since then $P_{B_{\eps}(\bar{x})}b(x) - P_{B_{\eps}(\bar{x})}b(y) = (\nabla^{\app}_{B_{2\eps}(\bar{x})} b)(x-y)$. To estimate this additional term, we use Lemma~\ref{lemma:CZLemma} below and the fact that $\nabla^{\app}_{B_{2\eps}(\bar{x})} b$ is trace-free. Indeed, since $\nabla^{\app}_{B_{2\eps}(\bar{x})} b$ is trace-free, $u(y)$ can be replaced with $u(y) - f(x, \eps)$ and by choosing $f$ appropriately, we can apply Lemma~\ref{lemma:CZLemma} to $u(y) - f(x, \eps)$.

\begin{lemma}[Integrability of the Square function II]\label{lemma:CZLemma}
 Let $\psi \in C^{\infty}_c(\R^d)$ be a function such that 
 \begin{equation}\label{eq:ConditionForCZLemma}
  \int_{B_r(0) \setminus B_s(0)} \psi \di x = 0 \quad \forall \, 0 < s < r < \infty.
 \end{equation}
 Then the operator $\Psi$ defined by 
 \begin{equation}
  \Psi f (x) \coloneqq \left( \int_0^{\infty} |f \ast \psi_{\eps} |^2(x) \frac{\di \eps}{\eps} \right)^{1/2}
 \end{equation}
 satisfies $\| \Psi f \|_{L^p} \leq C(d,p, \psi) \| f \|_{L^p}$ for any $1 < p < \infty$.
\end{lemma}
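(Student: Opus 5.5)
The plan is to treat $f \mapsto (f\ast\psi_\eps)_{\eps>0}$ as a linear operator $T$ taking scalar functions to functions with values in the Hilbert space $\mathcal H := L^2((0,\infty),\frac{\di\eps}{\eps})$. Then $\Psi f(x) = \|Tf(x)\|_{\mathcal H}$. I would prove the bound in three steps:
\begin{enumerate}
\item an $L^2$ bound for $T$ via Plancherel;
\item Calder\'on--Zygmund size and smoothness estimates for the $\mathcal H$-valued kernel $K(x) := (\psi_\eps(x))_{\eps>0}$;
\item the vector-valued Calder\'on--Zygmund theorem (as in \cite[Chapter 2]{EMS70} or \cite[Sections 6.3 \& 6.4]{EMS93}), which gives weak $(1,1)$ and hence $L^p$ boundedness for $1<p\le 2$; the range $2<p<\infty$ then follows by duality.
\end{enumerate}

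\textbf{Step 1 ($L^2$ bound).} By Plancherel and Fubini,
\begin{equation*}
\|\Psi f\|_{L^2}^2=\int_{\R^d}|\hat f(\xi)|^2\int_0^\infty |\hat\psi(\eps\xi)|^2\frac{\di\eps}{\eps}\di\xi .
\end{equation*}
So it suffices to show that $m(\xi):=\int_0^\infty |\hat\psi(\eps\xi)|^2\frac{\di\eps}{\eps}$ is bounded uniformly in $\xi\neq0$. The hypothesis \eqref{eq:ConditionForCZLemma}, letting $s\to0$ and $r\to\infty$, gives $\int\psi=0$, that is $\hat\psi(0)=0$. Since $\hat\psi$ is smooth, $|\hat\psi(\eta)|\le C\min(|\eta|,1)$. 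Since $\psi\in C^\infty_c$, $\hat\psi$ is Schwartz, so $|\hat\psi(\eta)|\le C|\eta|^{-1}$ for $|\eta|\ge1$. Splitting the $\eps$-integral at $\eps=|\xi|^{-1}$ gives $m(\xi)\le C\int_0^1 t\,\di t+C\int_1^\infty t^{-3}\di t\le C(\psi)$. (The full annulus condition is stronger than needed here; only the vanishing mean is used.)

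\textbf{Step 2 (kernel estimates).} Let $\supp\psi\subseteq B_R$. Then $\psi_\eps(x)\neq0$ only if $\eps\ge|x|/R$, and therefore
\begin{equation*}
\|K(x)\|_{\mathcal H}^2\le\|\psi\|_\infty^2\int_{|x|/R}^\infty\eps^{-2d}\frac{\di\eps}{\eps}\le C|x|^{-2d}.
\end{equation*}
Similarly, $\nabla\psi_\eps(x)=\eps^{-d-1}(\nabla\psi)(x/\eps)$ gives $\|\nabla K(x)\|_{\mathcal H}\le C|x|^{-d-1}$. This gradient bound implies the H\"ormander condition
\begin{equation*}
\int_{|x|>2|y|}\|K(x-y)-K(x)\|_{\mathcal H}\di x\le C
\end{equation*}
by the mean value inequality.

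\textbf{Step 3 (conclusion).} The $L^2$ bound together with the H\"ormander condition for $\mathcal H$-valued kernels yields weak type $(1,1)$ by the standard Calder\'on--Zygmund decomposition, which carries over verbatim to Hilbert-space valued operators. Marcinkiewicz interpolation then gives $L^p$ for $1<p\le2$. For $2<p<\infty$, I use the adjoint $T^*\colon L^{p'}(\R^d;\mathcal H)\to L^{p'}(\R^d)$, $T^*(g_\eps)=\int_0^\infty g_\eps\ast\tilde\psi_\eps\frac{\di\eps}{\eps}$ with $\tilde\psi(x)=\psi(-x)$. Its kernel satisfies the same estimates, so the same Calder\'on--Zygmund argument, now for operators from $\mathcal H$-valued to scalar functions, bounds $T^*$ on $L^{p'}$ with $p'<2$. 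By duality this bounds $T$ on $L^p$.

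The main obstacle is not the computations but a careful justification of the vector-valued Calder\'on--Zygmund theorem, including the adjoint direction and the a priori definition of $T$ on $\mathcal S$ before extending by density. I would handle this by citing \cite{EMS93} directly rather than reproving it.
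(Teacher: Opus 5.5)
Your proposal is correct and follows the paper's route: you prove the $L^2$ bound by Plancherel, using only $\hat\psi(0)=0$ and the decay of $\hat\psi$, and then invoke the $L^2(\frac{\di\eps}{\eps})$-valued Calder\'on--Zygmund theory for $1<p<\infty$. The paper simply cites \cite[Theorem 5.6.1]{LG14-Classical} for that second step, while you verify the size and H\"ormander estimates for the kernel $(\psi_\eps(x))_{\eps>0}$ and treat $2<p<\infty$ by duality through the adjoint, which is exactly what lies behind that citation.
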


\begin{proof}
 We carry out the proof only in the case $p = 2$. 
 The remaining cases follow from \cite[Theorem 5.6.1]{LG14-Classical} using that $(x, \eps) \mapsto \psi_{\eps}(x)$ defines an $L^2(\frac{d \eps}{\eps})$-valued Calder\'{o}n-Zygmund kernel. By Plancherel's theorem
 \begin{equation}
  \| \Psi f \|_{L^2}^2 = \int_{\R^d} \int_0^{\infty} |f \ast \psi_{\eps}|^2(x) \dfrac{\di \eps}{\eps} \di x = \int_0^{\infty} \int_{\R^d} |\widehat{f \ast \psi_{\eps}}|^2(\xi) \di \xi \dfrac{\di \eps}{\eps}.
 \end{equation}
 Then $\widehat{f \ast \psi_{\eps}} (\xi) = \hat{f}(\xi) \hat{\psi}(\eps \xi)$ and since $\psi \in C^{\infty}_c(\R^d)$ with mean zero, we get $|\hat{\psi}(\xi)| \leq C(d, \psi) \min (\sqrt{|\xi|}, |\xi|^{-1})$. Thus,
 \begin{equation}
  \int_0^{\infty} |\hat{\psi}(\eps \xi)|^2 \dfrac{d \eps}{\eps} \leq C(d, \psi)
 \end{equation}
 from which we conclude $\| \Psi f \|_{L^2} \leq C(d, \psi) \| f \|_{L^2}$.
\end{proof}

\begin{proof}[Proof of Theorem~\ref{thm:CommutatorDorronsoro}]
Observe
\begin{equation}\label{eq:DiPernaLionsCommutatorIntegratedAgainstTest}
\begin{split}
 \int_{\R^d} R^{\eps}(b, u)(x) \phi(x) \di x &= \int_{\R^d} \int_{\R^d} \varphi_{\eps}(x - \bar{x}) R^{\eps}(b, u)(x) \phi(x) \di x \di \bar{x} \\
 &= \int_{\R^d} \int_{\R^d} \int_{\R^d} \varphi_{\eps}(x - \bar{x}) \phi(x) u(y) (b(x) - b(y)) \cdot \nabla \varphi_{\eps}(x-y) \di y \di x \di \bar{x}.
\end{split}
\end{equation}
Adding and subtracting,
\[
 b(x) - b(y) =  b(x) - P_{B_{2\eps}(\bar{x})} b(x) + P_{B_{2\eps}(\bar{x})} b(x) - P_{B_{2\eps}(\bar{x})} b(y) + P_{B_{2\eps}(\bar{x})} b(y) - b(y).
\]
Since $P_{B_{2\eps}(\bar{x})} b(x) - P_{B_{2\eps}(\bar{x})} b(y) = (\nabla^{\app}_{B_{2\eps}(\bar{x})} b)(x-y)$, Equation~\eqref{eq:DiPernaLionsCommutatorIntegratedAgainstTest} is decomposed into three terms
\begin{align*}
 \int_{\R^d} R^{\eps}(b, u)(x) \phi(x) \di x &= \underbrace{\int_{\R^d} \int_{\R^d} \int_{\R^d} \varphi_{\eps}(x - \bar{x}) \phi(x) u(y) (b(x) - P_{B_{2\eps}(\bar{x})} b(x)) \cdot \nabla \varphi_{\eps}(x-y) \di y \di x \di \bar{x}}_{= I(\eps)} \\
 &\qquad + \underbrace{\int_{\R^d} \int_{\R^d} \int_{\R^d} \varphi_{\eps}(x - \bar{x}) \phi(x) u(y) (\nabla^{\app}_{B_{2\eps}(\bar{x})} b)(x-y) \cdot \nabla \varphi_{\eps}(x-y) \di y \di x \di  \bar{x}}_{= II(\eps)} \\
 &\qquad + \underbrace{\int_{\R^d} \int_{\R^d} \int_{\R^d} \varphi_{\eps}(x - \bar{x}) \phi(x) u(y) (P_{B_{2\eps}(\bar{x})} b(y) - b(y)) \cdot \nabla \varphi_{\eps}(x-y) \di y \di x \di \bar{x}.}_{= III(\eps)}
\end{align*}
The first and third term are treated similarly using quantitative differentiation (see Subsection~\ref{subsec:QuantDiff}):
\begin{align*}
 |I(\eps)| &\leq C(d) \| \phi \|_{L^{\infty}} \int_{\R^d} \fint_{B_{2\eps}(\bar{x})} \dfrac{|b(x) - P_{B_{2\eps}(\bar{x})} b(x)|}{2 \eps} \di x \MF(u)(\bar{x}) \di \bar{x} \\
 &\leq C(d) \| \phi \|_{L^{\infty}} \int_{\R^d} \eps^{-1} \Omega^1_b(\bar{x}, c(d) \eps) \MF(u)(\bar{x}) \di \bar{x}; \\
 |III(\eps)| &\leq C(d) \| u \|_{L^{\infty}} \int_{\R^d} \eps^{-1} \Omega^1_b(\bar{x}, c(d) \eps) \MF(\phi)(\bar{x}) \di \bar{x}.
\end{align*}
Thus, using Minkowski's integral inequality and Theorem~\ref{thm:QuantDiffDorronsoro},
\begin{equation}\label{eq:FullFirstTerm}
\begin{split}
 \left( \int_0^{\infty} |I(\eps)|^2 \,\dfrac{\di \eps}{\eps} \right)^{1/2} &\leq C(d) \| \phi \|_{L^{\infty}} \int_{\R^d} \left( \int_0^{\infty} \big( \eps^{-1} \Omega^1_b(\bar{x}, c(d) \eps) \big)^2 \dfrac{d \eps}{\eps} \right)^{1/2} \MF(u)(\bar{x}) \di \bar{x} \\
 &\leq C(d) \| \phi \|_{L^{\infty}} \int_{\R^d} G_1 b(\bar{x}) \MF(u)(\bar{x}) \di \bar{x} \\
 &\leq C(d,p,q) \| \phi \|_{L^{\infty}} \| b \|_{W^{1,p}} \| u \|_{L^{q}};
 \end{split}
\end{equation}
\begin{equation}\label{eq:FullThirdTerm}
 \left( \int_0^{\infty} |III(\eps)|^2 \,\dfrac{d \eps}{\eps} \right)^{1/2} \leq C(d,p,q) \| u \|_{L^{\infty}} \| b \|_{W^{1,p}} \| \phi \|_{L^{q}}.
\end{equation}
For the second term, let $\psi \colon \R^d \to \R$ be a function to be determined. Then, since $\Tr(\nabla^{\app}_{B_{\eps}(\bar{x})} b) = 0$,

\begin{equation}\label{eq:DecompositionOfTheSecondTerm}
\begin{split}
 II(\eps) &= \int_{\R^d} \int_{\R^d} \int_{\R^d} \varphi_{\eps}(x - \bar{x}) \phi(x) \left( u(y) - (u \ast \psi_{\eps})(x) \right) (\nabla^{\app}_{B_{2 \eps}(\bar{x})} b)(x-y) \cdot \nabla \varphi_{\eps}(x-y) \di y \di x \di \bar{x} \\
 &= \sum_{1 \leq i , j \leq d} \underbrace{\int_{\R^d} \int_{\R^d} \int_{\R^d} \varphi_{\eps}(x - \bar{x}) \phi(x) \left( u(y) - (u \ast \psi_{\eps})(x) \right) (\nabla^{\app}_{B_{2 \eps}(\bar{x})} b)_{ji} (x_i-y_i) \partial_j \varphi_{\eps}(x-y) \di y \di x \di \bar{x}.}_{=: II^{ij}(\eps)}
\end{split}
\end{equation}

For any $\eps > 0$, we define $\Psi_{\eps}^{ij} \colon \R^d \to \R$ by $\Psi_{\eps}^{ij}(z) = z_i \partial_j \varphi_{\eps}(z)$ and note that $\Psi_{\eps}^{ij}(z) = \eps^{-d} \Psi_{1}^{ij}(\frac{z}{\eps})$. 
Moreover, note that
\[
 \int_{B_r(0) \setminus B_s(0)} \Psi_{1}^{ij}(z) \di z = \delta_{ij} \int_{B_r(0) \setminus B_s(0)} \Psi_{1}^{11}(z) \di z \quad \forall \, 0 < s < r < \infty.
\]
Call $m = \int_{\R^d} \Psi_{1}^{11}(z) \, dz$ and observe that each one of the terms in Equation~\eqref{eq:DecompositionOfTheSecondTerm} equals
\begin{equation}\label{eq:EachTermInTheSecondTerm}
\begin{split}
 II^{ij}(\eps)&=\int_{\R^d} \int_{\R^d} \int_{\R^d} (\nabla^{\app}_{B_{2\eps}(\bar{x})} b)_{ji} \varphi_{\eps}(x - \bar{x}) \phi(x) \left( u(y) - (u \ast \psi_{\eps})(x) \right) \Psi_{\eps}^{ij} (x-y) \di y \di x \di \bar{x} \\
 &= \int_{\R^d} \int_{\R^d} (\nabla^{\app}_{B_{2\eps}(\bar{x})} b)_{ji} \varphi_{\eps}(x - \bar{x}) \phi(x) \left( (u \ast \Psi_{\eps}^{ij})(x) - m \delta_{ij} (u \ast \psi_{\eps})(x) \right) \di x \di \bar{x} \\
 &= \int_{\R^d} \int_{\R^d} (\nabla^{\app}_{B_{2\eps}(\bar{x})} b)_{ji} \varphi_{\eps}(x - \bar{x}) \phi(x) \left( (u \ast (\Psi_{1}^{ij} - m \delta_{ij} \psi)_{\eps})(x) \right) \di x \di \bar{x}.
\end{split}
\end{equation}
Since by Lemma~\ref{lemma:EstimateOnApproxGradient} $|(\nabla^{\app}_{B_{2\eps}(\bar{x})} b)_{ji}| \leq C(d) \MF|\MF|\nabla b||(\bar{x})$,
\[
 \int_{\R^d} (\nabla^{\app}_{B_{2 \eps}(\bar{x})} b)_{ij} \varphi_{\eps}(x - \bar{x}) \di \bar{x} \leq C(d) \int_{\R^d} \MF|\MF|\nabla b||(\bar{x}) \varphi_{\eps}(x - \bar{x}) \di \bar{x} \leq C(d) \MF|\MF|\MF|\nabla b|||({x}).
\]
By plugging this into Equation~\eqref{eq:EachTermInTheSecondTerm}, each term in Equation~\eqref{eq:EachTermInTheSecondTerm} is bounded by
\begin{align*}
 |II^{ij}(\eps)| &\leq C(d) \| \phi \|_{L^{\infty}} \int_{\R^d} g(x) \left| \left( (u \ast (\Psi_{1}^{ij} - m \delta_{ij} \psi)_{\eps})(x) \right) \right| \di x.
\end{align*}
with $g(x) = \MF|\MF|\MF|\nabla b||| (x)$.
Thus, using Minkowski's integral inequality,
\begin{align*}
 \left( \int_0^{\infty} |II^{ij}(\eps)|^2 \dfrac{\di \eps}{\eps} \right)^{1/2} &\leq C(d) \| \phi \|_{L^{\infty}} \int_{\R^d} g(x) \left( \int_0^{\infty} \left| \left( (u \ast (\Psi_{1}^{ij} - m \delta_{ij} \psi)_{\eps})(x) \right) \right|^2 \dfrac{\di \eps}{\eps} \right)^{1/2} \di x.
\end{align*}
Select the function $\psi$ such that
\begin{equation}
 \int_{B_r(0) \setminus B_s(0)} \psi(z) \di z = \dfrac{1}{m} \int_{B_r(0) \setminus B_s(0)} \Psi_1^{11}(z) \di z \quad \forall \, 0 < s < r < \infty
\end{equation}
 so that each function $\Psi_{1}^{ij} - m \delta_{ij} \psi$ satisfies Equation~\eqref{eq:ConditionForCZLemma}.
 Therefore, by Lemma~\ref{lemma:CZLemma}
 \begin{equation}\label{eq:FullSecondTerm}
 \begin{split}
  \left( \int_0^{\infty} |II(\eps)|^2 \dfrac{\di \eps}{\eps} \right)^{1/2} &\leq C(d,p,q) \| \phi \|_{L^{\infty}} \| g \|_{L^{p}} \| u \|_{L^{q}} \\
  &\leq C(d,p,q) \| \phi \|_{L^{\infty}} \| b \|_{W^{1,p}} \| u \|_{L^{q}}.
 \end{split}
 \end{equation}
 Therefore, from Equations~\eqref{eq:FullFirstTerm}, \eqref{eq:FullThirdTerm} and \eqref{eq:FullSecondTerm}, we conclude that \eqref{eq:CommutatorDorronsoroConclusion} holds.
 \end{proof}

\begin{remark}\label{rmk:SlightVariationOfTheorem}
     Following the proof of Theorem~\ref{thm:CommutatorDorronsoro}, it can be showed that 
    \begin{equation*}
        \left( \int_0^{\infty} \left| \int_{\R^d} ( R^{\eps}(b,u) \ast \varphi_{\eps} ) \phi \, dx \right|^2 \dfrac{\di \eps}{\eps} \right)^{1/2} \leq C(d,p,q) \| b \|_{W^{1,p}} \left( \| u \|_{L^q} \| \phi \|_{L^{\infty}} + \| u \|_{L^{\infty}} \| \phi \|_{L^{q}} \right).
    \end{equation*}
 \end{remark}

\subsection{Propagation of regularity}

We prove a propagation of regularity result as a consequence of Theorem~\ref{thm:CommutatorDorronsoro}.
To measure regularity, we use semi-norms of the form
\begin{equation}
 \llbracket f \rrbracket_{\kappa}^2 = \int_0^{3/4} \| f \ast \varphi_{\eps} - f \|_{L^2}^2 \dfrac{\di \eps}{|\log \eps|^{\kappa} \eps} \quad \kappa \in (0,1).
\end{equation}
For a fixed $\kappa \in (0,1)$, $\llbracket\, \cdot \, \rrbracket_{\kappa}^2$ measures $\log^{1-\kappa}$-regularity.
Note that this semi-norm is weaker than $[ \, \cdot \, ]_{\log}$ introduced in \eqref{eq:log-seminorm}. The reason for introducing $\llbracket\, \cdot \, \rrbracket_{\kappa}$ is that we are not able to propagate $[ \, \cdot \, ]_{\log}$-regularity using Theorem~\ref{thm:CommutatorDorronsoro}.

\begin{lemma}\label{lemma:RootsOfLogRegularity}
 There exists $R = R(\varphi) \geq 2$ such that for any $\kappa \in (0,1)$ and any $f \in \mathcal{S}(\R^d)$
 \begin{equation}\label{eq:RootsOfLogRegularityEquationToProve}
  c(\kappa, \varphi) \int_{\R^d} \chi_{\{ |\xi| > R \}} (\log |\xi|)^{1-\kappa} |\hat{f} (\xi)|^2 \di \xi \leq \llbracket f \rrbracket_{\kappa}^2 \leq C(\kappa, \varphi) \int_{\R^d} (1 + \chi_{\{ |\xi| > 1 \}} (\log |\xi|)^{1 - \kappa}) |\hat{f} (\xi)|^2 \di \xi.
 \end{equation}
\end{lemma}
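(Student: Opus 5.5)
We follow the proof of \eqref{eq:equivalence norms 1} and pass to the Fourier side. By Plancherel and $\widehat{f\ast\varphi_\eps}(\xi)=\hat\varphi(\eps\xi)\hat f(\xi)$ we get
\begin{equation*}
\llbracket f\rrbracket_\kappa^2=\int_{\R^d}|\hat f(\xi)|^2\, m_\kappa(\xi)\di\xi,
\qquad m_\kappa(\xi)\coloneqq\int_0^{3/4}|1-\hat\varphi(\eps\xi)|^2\frac{\di\eps}{|\log\eps|^\kappa\eps}.
\end{equation*}
Thus the lemma reduces to two pointwise bounds on the multiplier: $m_\kappa(\xi)\ge c\,(\log|\xi|)^{1-\kappa}$ for $|\xi|>R$, and $m_\kappa(\xi)\le C(1+\chi_{\{|\xi|>1\}}(\log|\xi|)^{1-\kappa})$ for all $\xi$. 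We use only the facts that $\varphi$ is a radial Schwartz-class mollifier with $\hat\varphi(0)=1$ (normalized so that $\int\varphi=1$) and $\hat\varphi$ smooth and radial. Hence $|1-\hat\varphi(\eta)|\le C\min(|\eta|^2,1)$, the quadratic bound coming from radial symmetry; even $|\eta|$ would suffice. Also $\hat\varphi(\eta)\to0$ as $|\eta|\to\infty$ by Riemann--Lebesgue.

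\textbf{Upper bound.} Split the $\eps$-integral at $\eps=1/|\xi|$. On $\eps<\min(1/|\xi|,3/4)$ we use $|1-\hat\varphi(\eps\xi)|^2\le C\eps^2|\xi|^2$. Since $|\log\eps|\ge\log(4/3)$ there, this piece is bounded by $C\int_0^{1/|\xi|}\eps|\xi|^2\di\eps\le C$. On $1/|\xi|<\eps<3/4$, which is nonempty only when $|\xi|>4/3$, we bound the symbol by a constant and compute exactly:
\begin{equation*}
\int_{1/|\xi|}^{3/4}\frac{\di\eps}{|\log\eps|^\kappa\eps}=\int_{\log(4/3)}^{\log|\xi|}\frac{\di t}{t^\kappa}\le\frac{(\log|\xi|)^{1-\kappa}}{1-\kappa},
\end{equation*}
using the substitution $t=-\log\eps$. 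This gives the upper bound with $C(\kappa,\varphi)$.

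\textbf{Lower bound.} Choose $\rho>0$ such that $|\hat\varphi(\eta)|\le1/2$ for $|\eta|\ge\rho$, which is possible by Riemann--Lebesgue. Then $|1-\hat\varphi(\eps\xi)|^2\ge1/4$ whenever $\eps\ge\rho/|\xi|$. Restricting the integral to $\rho/|\xi|\le\eps\le3/4$ and substituting as above gives
\begin{equation*}
m_\kappa(\xi)\ge\frac14\int_{\log(4/3)}^{\log|\xi|-\log\rho}\frac{\di t}{t^\kappa}=\frac{(\log|\xi|-\log\rho)^{1-\kappa}-(\log(4/3))^{1-\kappa}}{4(1-\kappa)}.
\end{equation*}
This is only meaningful when $|\xi|$ is large. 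Take $R\ge\max(2,\rho^2,(4/3)^2\rho)$, depending only on $\varphi$, so that $\log|\xi|-\log\rho\ge\frac12\log|\xi|$ for $|\xi|>R$. To absorb the subtracted constant we may further enlarge $R$, or simply note that $t^{-\kappa}\ge(\log|\xi|)^{-\kappa}$ on the range of integration. The second option gives
\begin{equation*}
m_\kappa(\xi)\ge\frac14(\log|\xi|)^{-\kappa}\bigl(\tfrac12\log|\xi|-\log(4/3)\bigr)\ge c\,(\log|\xi|)^{1-\kappa}
\end{equation*}
once $R\ge(4/3)^4$. This keeps $R$ independent of $\kappa$, as the statement requires.

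\textbf{Main obstacle.} The only delicate point is this lower bound. We need a quantitative non-degeneracy of $1-\hat\varphi$ at high frequencies, obtained here from Riemann--Lebesgue, and we must keep $R$ independent of $\kappa$. The pointwise estimate $t^{-\kappa}\ge(\log|\xi|)^{-\kappa}$ is what achieves the latter.
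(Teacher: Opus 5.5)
Your proof is correct and follows essentially the paper's argument: Plancherel reduces everything to the multiplier $\int_0^{3/4}|1-\hat\varphi(\eps\xi)|^2\frac{\di\eps}{|\log\eps|^{\kappa}\eps}$. The upper bound comes from $|1-\hat\varphi(\eps\xi)|^2\le C\min\{1,\eps^2|\xi|^2\}$ with the split at $\eps=|\xi|^{-1}$, and the lower bound from $|1-\hat\varphi(\eps\xi)|^2\ge c$ for large $\eps|\xi|$, both finished with the explicit antiderivative of $t^{-1}|\log t|^{-\kappa}$. Your lower-bound step is a bit more explicit than the paper's ``up to taking $R$ sufficiently large'' in showing that $R$ depends only on $\varphi$. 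One small fix is needed there: assume $\rho\ge 1$ without loss of generality, so that the range of integration stays below $\log|\xi|$ and the bound $t^{-\kappa}\ge(\log|\xi|)^{-\kappa}$ actually holds on it.
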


\begin{proof}
 By Plancherel's identity,
 \begin{equation}\label{eq:RootsOfLogRegularityApplicationPlancherel}
  \llbracket f \rrbracket_{\kappa}^2 = \int_{\R^d} |\hat{f}(\xi)|^2 \int_0^{3/4} |1 - \hat{\varphi}(\eps \xi)|^2 \dfrac{\di \eps}{|\log \eps|^{\kappa} \eps} \di \xi.
 \end{equation}
 Using standard properties of the Fourier transform, there exists an $R \geq 2$ such that
 \begin{equation}
  c(\varphi) \chi_{ \{ |\xi| > \frac{R}{2 \eps} \} } \leq |1 - \hat{\varphi}(\eps \xi)|^2 \leq C(\varphi) \min \{ 1, \eps^2 |\xi|^2 \} \quad \forall \xi \in \R^d, \ \eps \in (0, 3/4).
 \end{equation}
 If $|\xi| \geq R$, integrating with respect to the measure $\frac{d \eps}{|\log \eps|^{\kappa} \eps}$ gives
 \begin{equation}
  c(\varphi) \int_{\frac{R}{2 |\xi|}}^{3/4} \frac{\di \eps}{|\log \eps|^{\kappa} \eps} \leq \int_0^{3/4} |1 - \hat{\varphi}(\eps \xi)|^2 \dfrac{\di \eps}{|\log \eps|^{\kappa} \eps} \leq C(\varphi) \left( |\xi|^2 \int_0^{|\xi|^{-1}} \frac{\eps}{|\log \eps|^{\kappa}} \, \di \eps + \int_{|\xi|^{-1}}^{3/4} \frac{\di \eps}{|\log \eps|^{\kappa} \eps} \right).
 \end{equation}
 Using that $\frac{d}{dt} [- (1 - \kappa)^{-1} |\log t|^{1 - \kappa}] = t^{-1} |\log t|^{- \kappa}$, we find that, up to taking $R$ sufficiently large,
 \begin{equation}
  c(\kappa, \varphi) \chi_{\{ |\xi| > R \}} (\log |\xi|)^{1 - \kappa} \leq \int_0^{3/4} |1 - \hat{\varphi}(\eps \xi)|^2 \dfrac{\di \eps}{|\log \eps|^{\kappa} \eps} \leq C(\kappa, \varphi) + C(\kappa, \varphi) \chi_{ \{ |\xi| > 1 \} } (\log |\xi|)^{1 - \kappa}.
 \end{equation}
 Plugging this into \eqref{eq:RootsOfLogRegularityApplicationPlancherel} gives \eqref{eq:RootsOfLogRegularityEquationToProve}.
\end{proof}

 \begin{proposition}\label{prop:PropagationOfRegularityWithDorronsoro}
  Let $1 < p,q < \infty$ with $\frac{1}{p} + \frac{1}{q} = 1$. If $u_t$ and $b_t$ solve~\eqref{Transport equation} and satisfy
  \[
   b \in L^1([0,T] ; W^{1,p}(\R^d; \R^d)), \quad \divergence b_t = 0, \quad u \in L^{\infty}([0,T] ; L^q \cap L^{\infty}(\R^d))
  \]
  then for all $\kappa \in (1/2, 1)$
  \begin{equation}\label{eq:PropagationOfRegularityWithDorronsoroIneq}
   \llbracket u_t \rrbracket_{\kappa}^2 - \llbracket u_0 \rrbracket_{\kappa}^2 \leq  C(d,p,q,\kappa) \| b \|_{L^1_t W^{1,p}_x} \| u \|_{L^{\infty}_t L^q_x} \| u \|_{L^{\infty}_t L^{\infty}_x}.
  \end{equation}
 \end{proposition}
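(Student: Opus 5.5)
We follow the scheme of the proof of Theorem~\ref{thm:LogRegularityViaHeatFlow}, replacing the heat flow by the mollifier $\varphi_\eps$. By approximation we may assume $u_0\in\mathcal S(\R^d)$ and $b_t$ smooth and compactly supported, so that $u_t\in\mathcal S(\R^d)$. Set $u_t^\eps=u_t\ast\varphi_\eps$. By \eqref{eq:CommutatorInIntro}, $w_t^\eps:=u_t^\eps-u_t$ solves
\[
\partial_t w^\eps_t+b_t\cdot\nabla w^\eps_t=R^\eps(b_t,u_t),
\]
with the sign convention of the commutator. Multiplying by $w^\eps_t$ and using $\divergence b_t=0$, then integrating in space and time, we get
\[
\|u^\eps_t-u_t\|_{L^2}^2-\|u^\eps_0-u_0\|_{L^2}^2\le 2\int_0^T\Big|\int_{\R^d}(R^\eps(b_s,u_s)\ast\varphi_\eps)\,u_s\di x\Big|\di s+2\int_0^T\Big|\int_{\R^d}R^\eps(b_s,u_s)\,u_s\di x\Big|\di s .
\]
Here we used that $\varphi$ is radial, hence even, to move the convolution onto the commutator.

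Next we integrate this inequality against the measure $\frac{\di\eps}{|\log\eps|^\kappa\eps}$ on $(0,3/4)$. For a fixed $s$, set $A(\eps)=|\int R^\eps(b_s,u_s)u_s\di x|$, and let $A'(\eps)$ be the analogous quantity with $R^\eps\ast\varphi_\eps$. By Cauchy--Schwarz in $\eps$,
\[
\int_0^{3/4}A(\eps)\frac{\di\eps}{|\log\eps|^\kappa\eps}\le\Big(\int_0^\infty A(\eps)^2\frac{\di\eps}{\eps}\Big)^{1/2}\Big(\int_0^{3/4}\frac{\di\eps}{|\log\eps|^{2\kappa}\eps}\Big)^{1/2}.
\]
The last integral equals $\int_{\log(4/3)}^\infty \tau^{-2\kappa}\di\tau$, which is finite precisely because $\kappa>1/2$. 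This is where the restriction on $\kappa$ enters: the square-function estimate of Theorem~\ref{thm:CommutatorDorronsoro} controls only an $L^2(\di\eps/\eps)$ norm, whereas the energy method requires an $L^1$-type integral in $\eps$. The logarithmic weight bridges the two, and this is exactly why $\llbracket\cdot\rrbracket_\kappa$ is used instead of $[\cdot]_{\log}$.

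We then apply Theorem~\ref{thm:CommutatorDorronsoro} with $\phi=u_s$ to bound the $A$-term, and Remark~\ref{rmk:SlightVariationOfTheorem} to bound the $A'$-term. Both give the bound $C(d,p,q)\|b_s\|_{W^{1,p}}\|u_s\|_{L^q}\|u_s\|_{L^\infty}$, where the two symmetric terms $\|u\|_{L^q}\|\phi\|_{L^\infty}+\|u\|_{L^\infty}\|\phi\|_{L^q}$ coincide. Integrating in $s\in[0,T]$ and taking suprema of the $u$-norms in time yields \eqref{eq:PropagationOfRegularityWithDorronsoroIneq}, with constant $C(d,p,q)(2\kappa-1)^{-1/2}(\log(4/3))^{(1-2\kappa)/2}$ up to factors.

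The main remaining obstacle is removing the regularity assumptions. We approximate $u_0$ and $b$ by mollification, keeping $\divergence b=0$, and use that the right-hand side is continuous in the relevant norms. On the left, $\llbracket u_0^n\rrbracket_\kappa\to\llbracket u_0\rrbracket_\kappa$ by dominated convergence, since $\|f\ast\varphi_\eps-f\|_{L^2}$ is controlled by the Fourier bound in Lemma~\ref{lemma:RootsOfLogRegularity}; if $\llbracket u_0\rrbracket_\kappa=\infty$ the statement is trivial. For $u_t$ we use lower semicontinuity via Fatou's lemma, given that the approximate solutions converge to the renormalized solution $u_t$, which is unique under the DiPerna--Lions hypotheses \eqref{HP}. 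The convergence $u^n_t\to u_t$ only needs to hold weakly in $L^2$ for each $t$, since $\llbracket\cdot\rrbracket_\kappa$ is a weakly lower semicontinuous quadratic form.
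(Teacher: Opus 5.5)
Your proposal is correct and follows the paper's proof essentially step by step: the mollifier energy balance as in \eqref{eq:EnergyBalancePlusCommutator}, integration against $\frac{\di\eps}{|\log\eps|^{\kappa}\eps}$, Cauchy--Schwarz in $\eps$ (finite because $\kappa>1/2$), and then Theorem~\ref{thm:CommutatorDorronsoro} and Remark~\ref{rmk:SlightVariationOfTheorem} applied with $\phi=u_s$. Your closing approximation and lower-semicontinuity argument spells out a reduction to smooth data that the paper leaves implicit, since it simply refers back to the smooth setting used in the proof of Theorem~\ref{thm:LogRegularityViaHeatFlow}.
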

 
 \begin{proof}
 As in Equation~\eqref{eq:EnergyBalancePlusCommutator}, we have
\begin{align*}
 \| u_t \ast \varphi_{\eps} - u_t \|_{L^2}^2 &\leq \| u_0 \ast \varphi_{\eps} - u_0 \|_{L^2}^2 + 2 \int_0^t \left| \int_{\R^d} (R^{\eps}(b_s , u_s) \ast \varphi_{\eps} ) u_s \di x \right| \di s \\
 &\qquad + 2 \int_0^t \left| \int_{\R^d} R^{\eps}(b_s , u_s) u_s \di x \right| \di s.
\end{align*}
Integrating with respect to the measure $\frac{d \eps}{|\log \eps|^{\kappa} \eps}$, we get
\begin{align*}
 \llbracket u_t \rrbracket_{\kappa}^2 &\leq \llbracket u_0 \rrbracket_{\kappa}^2 + C \int_0^T \int_0^{3/4} \left| \int_{\R^d} (R^{\eps}(b_s , u_s) \ast \varphi_{\eps} ) u_s \di x \right| \dfrac{\di \eps}{|\log \eps|^{\kappa} \eps} \di s \\
 &\qquad + C \int_0^T \int_0^{3/4} \left| \int_{\R^d} R^{\eps}(b_s , u_s) u_s \di x \right| \dfrac{\di \eps}{|\log \eps|^{\kappa} \eps} \di s.
\end{align*}
By H\"older's inequality and Theorem~\ref{thm:CommutatorDorronsoro}, the last addend of the right-hand side is estimated by
\begin{align*}
 &C \int_0^T \left( \int_0^{3/4} \left| \int_{\R^d} R^{\eps}(b_s , u_s) u_s \di x \right|^2 \dfrac{\di \eps}{\eps} \right)^{1/2} \left( \int_0^{3/4} \dfrac{\di \eps}{|\log \eps|^{2 \kappa} \eps} \right)^{1/2} \di s \\
 &\qquad \leq C(d,p,q,\kappa) \| b \|_{L^1_t W^{1,p}_x} \| u \|_{L^{\infty}_t L^q_x} \| u \|_{L^{\infty}_t L^{\infty}_x}.
\end{align*} 
where in the last inequality we used $\kappa > 1/2$.
The second addend is estimated similarly, using Remark~\ref{rmk:SlightVariationOfTheorem}. Due to Lemma~\ref{lemma:RootsOfLogRegularity}, we find \eqref{eq:PropagationOfRegularityWithDorronsoroIneq}.
\end{proof}

\appendix

\section{Velocity fields with gradient in Hardy space $\mathcal{H}^1$}
An extension of Theorem~\ref{thm:LogRegularityViaHeatFlow} to velocity fields with gradient in $\mathcal{H}^1$ reads as:
\begin{theorem}\label{thm:RegularityThmHardy}
 If $u \in L^{\infty}([0,T] \times \R^d)$ and $b \in L^1([0,T] ; W^{1,1}(\R^d; \R^d))$ solve \eqref{Transport equation} and satisfy
 \begin{equation}
  \nabla_{\sym} b \in L^1([0,T] ; \mathcal{H}^1(\R^d; \R^{d \times d}_{\sym})) \quad \text{and} \quad \operatorname{div} b = 0
 \end{equation}
 then
 \begin{equation}\label{eq:IneqToProveRegularityThmHardy}
  [ u_t ]_{\log}^2 - [ u_0 ]_{\log}^2 \leq C(d) \| \nabla_{\sym} b \|_{L^1_t \mathcal{H}^1_x} \| u \|_{L^{\infty}_{t,x}}^2.
 \end{equation}
\end{theorem}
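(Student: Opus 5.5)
The plan is to follow the proof of Theorem~\ref{thm:LogRegularityViaHeatFlow} verbatim up to the energy balance \eqref{eq:EnergyBalancePlusCommutator}, and to replace only the commutator estimate. After reducing by approximation to smooth data, the balance gives
\[
[u_t]_{\log}^2-[u_0]_{\log}^2\le 2\int_0^T\int_0^1\Big(\Big|\int_{\R^d}P_\eps C^\eps(b_s,u_s)u_s\di x\Big|+\Big|\int_{\R^d}C^\eps(b_s,u_s)u_s\di x\Big|\Big)\frac{\di\eps}{\eps}\di s .
\]
It therefore suffices to prove the analogue of Theorem~\ref{th:commutatorgeneral}, and of Remark~\ref{remark:variation commutator estimate}, with $p=1$ and $q=r=\infty$, with $\|\nabla_{\sym}b\|_{L^p}$ replaced by $\|\nabla_{\sym}b\|_{\mathcal H^1}$:
\[
\int_0^\infty\Big|\int_{\R^d}C^\eps(b,u)\phi\di x\Big|\frac{\di\eps}{\eps}\le C(d)\|\nabla_{\sym}b\|_{\mathcal H^1}\|u\|_{L^\infty}\|\phi\|_{L^\infty}.
\]
The smooth approximation must be chosen so that $\|\nabla_{\sym}b^n\|_{\mathcal H^1}\to\|\nabla_{\sym}b\|_{\mathcal H^1}$; convolution with a standard mollifier does this.

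Next I would use the identity \eqref{Commutator}. By the same change of variables as in \eqref{z3} and the bound \eqref{z2}, the left-hand side is controlled by
\[
\int_{\R^d}|\nabla_{\sym}b|\,T(u,\phi)\di x,\qquad T(u,\phi)(x)=\int_0^\infty|\nabla P_su|\,H(|\nabla P_\cdot\phi|)(s)\di s\le C\,Su\,S\phi .
\]
Here lies the key difficulty. Pairing $|h|$ with $Su\,S\phi$ requires $Su\,S\phi\in L^\infty$, but square functions of $L^\infty$ functions are not bounded; they are only in $\BMO$, in a Carleson-measure sense. The absolute value inside also destroys the $\mathcal H^1$–$\BMO$ duality. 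So the absolute values must not be taken pointwise in $x$.

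My plan is to use the atomic decomposition of $\mathcal H^1$. Write $\nabla_{\sym}b=\sum_k\lambda_k a_k$, where each $a_k$ is supported in a ball $B_k=B(x_k,\rho_k)$, has mean zero and satisfies $\|a_k\|_{L^\infty}\le|B_k|^{-1}$, and where $\sum_k|\lambda_k|\le C\|\nabla_{\sym}b\|_{\mathcal H^1}$. The commutator integral is linear in $\nabla_{\sym}b$ through \eqref{Commutator}, so I keep the absolute value outside the $x$-integral. For a single atom I would estimate
\[
\int_0^\infty\Big|\int_0^\eps\int_{\R^d}a\,\nabla P_su\cdot\nabla P_{\eps-s}\phi\di x\di s\Big|\frac{\di\eps}{\eps}\le C(d)\|u\|_\infty\|\phi\|_\infty ,
\]
splitting the scales at $\eps\approx\rho^2$.

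For small scales, $\eps\le\rho^2$, I would drop the cancellation of $a$ and use the local $L^2$ square-function bound on a dilate $B^*$ of $B$. By the proof of \eqref{z3}–\eqref{z4} this is at most $|B|^{-1}\int_{B}Su_{\le\rho^2}\,S\phi_{\le\rho^2}$. Then, by Cauchy–Schwarz and the standard Carleson estimate $\int_B\int_0^{\rho^2}|\nabla P_su|^2\di s\di x\le C|B|\|u\|_\infty^2$, the small scales contribute $C\|u\|_\infty\|\phi\|_\infty$. The Carleson estimate itself comes from splitting $u=u\chi_{B^*}+u\chi_{(B^*)^c}$, using the $L^2$ bound of Lemma~\ref{lemma:square function integrability} for the local part and Gaussian tail decay for the far part.

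For large scales, $\eps>\rho^2$, I would use $\int a=0$. Subtracting the value at the centre $x_k$, the integrand becomes $a(x)\big[F_{s,\eps}(x)-F_{s,\eps}(x_k)\big]$ with $F_{s,\eps}=\nabla P_su\,\nabla P_{\eps-s}\phi$. Since $|\nabla F_{s,\eps}|\lesssim\|u\|_\infty\|\phi\|_\infty\big(s^{-1}(\eps-s)^{-1/2}+s^{-1/2}(\eps-s)^{-1}\big)$, this difference is bounded by $\rho\,|\nabla F_{s,\eps}|$. The endpoint singularities in $s$ must be handled by splitting $s<\eps/2$ and $s>\eps/2$, and near each endpoint using the undifferentiated bound on the factor whose time parameter is small. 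The expected outcome is a factor $(\rho^2/\eps)^{1/2}$, which is integrable against $\di\eps/\eps$ on $(\rho^2,\infty)$.

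Summing over atoms gives the claim, and integrating in time concludes the proof. The $P_\eps C^\eps$ term is handled identically, since $P_\eps$ is self-adjoint and $\|P_\eps\phi\|_\infty\le\|\phi\|_\infty$. The main obstacle is the endpoint bookkeeping in the large-scale estimate, in particular making the gradient bounds integrable near $s=0$ and $s=\eps$. If this fails, I would instead smooth the atom by $P_{\eps/4}$, using the semigroup property to move heat flow onto $a$.
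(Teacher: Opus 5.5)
Your proof is correct, but it is organised differently from the paper's. The paper never puts the absolute value inside the $\eps$-integral. Since \eqref{eq:EnergyBalancePlusCommutatorInApp} is an identity, after Fubini it only has to bound $\bigl|\int_0^1\int_{\R^d}C^\eps(b,u)\phi\,\di x\,\frac{\di\eps}{\eps}\bigr|=\bigl|\sum_{i,j}\int_{\R^d}(\nabla_{\sym}b)_{ij}\,T_{ij}(u,\phi)\,\di x\bigr|$. It does this by proving the bilinear estimate $\|T(u,\phi)\|_{\BMO}\le C(d)\|u\|_{\BMO}\|\phi\|_{\BMO}$ (Proposition~\ref{prop:BoundOnT}) and then invoking $\mathcal H^1$--$\BMO$ duality \eqref{eq:H1AgainstBMO}.

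You work on the predual side with atoms, but the ingredients coincide. Your regime $\eps\le\rho^2$ is the analogue of the paper's $F_t^1,F_t^2$: the Carleson bound of Lemma~\ref{lemma:BMO-Property-Square-Fct} together with the Hilbert-type inequality \eqref{z2}. Your regime $\eps>\rho^2$ is the analogue of $F_t^3$: the derivative bounds \eqref{eq:RegularisationPropertyBMO}, with the atom's cancellation replacing the mean-oscillation estimate.

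Your route gives a strictly stronger statement: an $L^1(\di\eps/\eps)$ bound with the absolute value inside, which is the exact $\mathcal H^1$ analogue of Theorem~\ref{th:commutatorgeneral}. As a result, the inequality form of the energy balance suffices. It also avoids the parametrisation $T=\int_0^1F_t\,\di t$ and its $\sqrt{\log(1/t)}/\sqrt{t}$ bookkeeping. The paper's route gives a clean structural result ($T\colon\BMO\times\BMO\to\BMO$). It also needs no discussion of exchanging the atomic sum with the pairings. In your case that exchange is harmless, since for fixed $\eps$ the functional $h\mapsto\int_0^\eps\int_{\R^d}h\,\nabla P_su\,\nabla P_{\eps-s}\phi\,\di x\,\di s$ is bounded on $L^1$ by $C\|u\|_{L^\infty}\|\phi\|_{L^\infty}$.

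There are two small corrections.

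First, the large-scale regime gives $(\rho^2/\eps)^{1/2}\bigl(1+\log(\eps/\rho^2)\bigr)$, not $(\rho^2/\eps)^{1/2}$. The switch to the undifferentiated bound must happen at $s\approx\rho^2$ (and symmetrically at $\eps-s\approx\rho^2$). On $\rho^2<s<\eps/2$ the term $\rho\,\|\nabla^2P_su\|_{L^\infty}\|\nabla P_{\eps-s}\phi\|_{L^\infty}\lesssim\rho\, s^{-1}\eps^{-1/2}$ integrates to a logarithm. This is harmless, because the bound is still integrable against $\di\eps/\eps$ on $(\rho^2,\infty)$, so your fallback is not needed.

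Second, for the $P_\eps C^\eps$ term the test function $P_\eps\phi$ depends on $\eps$, so your fixed-$\phi$ claim cannot simply be quoted. Instead, rerun the argument with $\nabla P_{2\eps-s}\phi$ in place of $\nabla P_{\eps-s}\phi$. The substitution $\tau=2\eps-s$ gives $\frac{\di\eps}{\eps}=\frac{\di\tau}{s+\tau}$, so the small-scale step goes through with $2\rho^2$ in place of $\rho^2$. The large-scale step only improves, since $2\eps-s\ge\eps$.
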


We use $\mathcal{H}^p(\R^d)$ to denote Hardy spaces on $\R^d$ where $1 \leq p < \infty$.
There are multiple equivalent ways to define $\mathcal{H}^p(\R^d)$ but we only provide one:
Let $\varphi \in C^{\infty}_0(\R^d)$ with $\int_{\R^d} \varphi \di x = 1$, define $\varphi_{\delta}(x) \coloneqq \delta^{-d}\varphi(x / \delta)$ and $M_{\varphi} f (x) = \sup_{\delta > 0} |f \ast \varphi_{\delta}(x)|$.
Then $\mathcal{H}^p(\R^d) = \{ f \in L^1_{\loc}(\R^d) : M_{\varphi} f \in L^p(\R^d) \}$ with $\| f \|_{\mathcal{H}^p} \coloneqq \| M_{\varphi} f \|_{L^p}$.
Note that $\mathcal{H}^p(\R^d) = L^p(\R^d)$ with equivalent norms if $p \in (1,\infty)$, while $\mathcal{H}^1(\R^d)\subsetneq L^1(\R^d)$.
The dual of $\mathcal{H}^1(\R^d)$ is $\BMO(\R^d)$ modulo constants. In particular, for any $f \in \mathcal{H}^1(\R^d)$ and $g \in \BMO(\R^d)$, we have
\begin{equation}\label{eq:H1AgainstBMO}
 \left| \langle f , g \rangle \right| \leq \| f \|_{\mathcal{H}^1} \| g \|_{\BMO}.
\end{equation}

The proof of Theorem~\ref{thm:RegularityThmHardy} relies on the following commutator estimate:
\begin{theorem}\label{thm:MainThmHardy}
 Let $b \in W^{1,1}(\R^d; \R^d)$ with $\nabla_{\sym} b \in \mathcal{H}^1(\R^d; \R^{d \times d}_{\sym})$ and $\operatorname{div} b = 0$. Then for any $u, \phi \in \BMO(\R^d) \cap C^1_c(\R^d)$, it holds
 \begin{equation}
  \left| \int_0^1 \int_{\R^d} C^\eps(b,u) \phi \di x \frac{\di \eps}{\eps} \right| \leq C(d) \| \nabla_{\sym} b \|_{\mathcal{H}^1} \| u \|_{\BMO} \| \phi \|_{\BMO}.
 \end{equation}
\end{theorem}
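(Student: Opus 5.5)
We start again from the commutator identity \eqref{Commutator} of Proposition~\ref{prop: Ambrosio-trevisan commutator}. Integrating it against $\frac{\di\eps}{\eps}$ over $(0,1)$ and applying Fubini gives
\begin{equation*}
\int_0^1 \int_{\R^d} C^\eps(b,u)\phi \di x \frac{\di\eps}{\eps}
= 2\int_{\R^d} \nabla_{\sym} b : F(u,\phi) \di x,
\end{equation*}
where the symmetric matrix field $F(u,\phi)$ is defined by
\begin{equation*}
F(u,\phi)(x)\coloneqq \sym\int_0^1\int_0^\eps \nabla P_s u(x)\otimes\nabla P_{\eps-s}\phi(x)\di s\frac{\di\eps}{\eps}.
\end{equation*}
Since the pairing is linear in $\nabla_{\sym} b$, \eqref{eq:H1AgainstBMO} shows that it suffices to prove
\begin{equation*}
\norm{F(u,\phi)}_{\BMO}\le C(d)\norm{u}_{\BMO}\norm{\phi}_{\BMO}.
\end{equation*}
The absolute value is now outside the $\eps$-integral, which is what lets us avoid the $|h|$-type bound used in the proof of Theorem~\ref{th:commutatorgeneral}. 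Applying $\int_0^1\int_0^\eps(\cdot)\di s\,\frac{\di\eps}{\eps}$ and Fubini as in \eqref{z3}, we can rewrite
\begin{equation*}
F=\int_0^1\int_0^{1-s}\nabla P_s u\otimes\nabla P_{\sigma}\phi\,\frac{\di\sigma}{s+\sigma}\di s
\end{equation*}
(symmetrized). This is a bilinear Carleson-type form, and the truncation at $1$ only reduces the domain.

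To prove the BMO bound, we fix a ball $B=B_\rho(x_0)$ and estimate $\fint_B |F-c_B|$. We split the $(s,\sigma)$ domain into three pieces.

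\emph{Local piece, $s,\sigma\le \rho^2$.} We decompose $u=(u-u_{4B})\chi_{4B}+(u-u_{4B})\chi_{(4B)^c}+u_{4B}$, and likewise for $\phi$; the constants are killed by $\nabla$. For the purely local parts we use Cauchy--Schwarz in $x$, then the bound \eqref{z2} for $H$ with $p=2$, then the $L^2$ square function identity of Lemma~\ref{lemma:square function integrability}. This bounds $\fint_B$ of this contribution by
\begin{equation*}
C\rho^{-d}\norm{(u-u_{4B})\chi_{4B}}_{L^2}\norm{(\phi-\phi_{4B})\chi_{4B}}_{L^2}\le C\norm{u}_{\BMO}\norm{\phi}_{\BMO}
\end{equation*}
by John--Nirenberg. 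For the far parts, Gaussian decay of $\nabla G_s$ for $s\le\rho^2$ at distance $\gtrsim 2^k\rho$ gives $|\nabla P_s[(u-u_{4B})\chi_{(4B)^c}]|(x)\lesssim \rho^{-1}e^{-c\rho^2/s}\norm{u}_{\BMO}$ for $x\in B$, after summing over dyadic annuli with the usual $k\norm{u}_{\BMO}$ growth of averages. These terms are then harmless after integrating in $s,\sigma$.

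\emph{Large scales, $s$ or $\sigma\ge\rho^2$.} For $s\ge \rho^2$ we use the standard BMO estimates $\norm{\nabla P_s u}_{L^\infty}\le C s^{-1/2}\norm{u}_{\BMO}$ and the oscillation bound $\operatorname{osc}_B \nabla P_s u\le C\rho\, s^{-1}\norm{u}_{\BMO}$. We then subtract the constant $c_B$ equal to the value of the large-scale part at $x_0$. The oscillation of $\nabla P_su\otimes\nabla P_\sigma\phi$ over $B$ is bounded by
\begin{equation*}
\norm{u}_{\BMO}\norm{\phi}_{\BMO}\bigl(\rho s^{-1}\sigma^{-1/2}+\rho\,\sigma^{-1}s^{-1/2}\bigr)
\end{equation*}
when both $s,\sigma\ge\rho^2$. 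Integrating against $\frac{\di s\,\di\sigma}{s+\sigma}$ over $\{s,\sigma\ge\rho^2\}$ gives a scale-invariant finite constant.

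\emph{Mixed region, $s\le\rho^2\le\sigma$ (and symmetrically).} We bound $|\nabla P_\sigma\phi|$ and its oscillation in $L^\infty$ as above. The remaining factor is $\int_0^{\rho^2}|\nabla P_s u|\int_{\rho^2}^\infty \sigma^{-1/2}\frac{\di\sigma}{s+\sigma}\di s\lesssim \rho^{-1}\int_0^{\rho^2}|\nabla P_su|\di s$. Its $\fint_B$ is controlled, via Cauchy--Schwarz, by $\rho^{-1}\cdot\rho\,(\fint_B\int_0^{\rho^2}|\nabla P_su|^2)^{1/2}\lesssim\norm{u}_{\BMO}$, using the Carleson measure characterization of BMO (Fefferman--Stein). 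There is a subtlety here: this region is not subtracted by a constant, so we need $\fint_B|\cdot|$ itself to be bounded, which is what the Carleson bound provides.

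The main obstacle is the mixed region together with bookkeeping the constant $c_B$. In particular, the Carleson estimate $\fint_B\int_0^{\rho^2}|\nabla P_s u|^2\di s\,\di x\le C\norm{u}_{\BMO}^2$ has to be invoked or reproved; it follows from the local/far splitting used in the first piece. Finally, for the application we note that $u,\phi\in C^1_c$ justifies all Fubini steps and the duality \eqref{eq:H1AgainstBMO}, since $F$ is then bounded and continuous.
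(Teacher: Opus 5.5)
Your proof is correct. It uses the same ingredients as the paper: the reduction via Fubini and $\mathcal{H}^1$--$\BMO$ duality to a $\BMO$ bound for the bilinear field, the Carleson bound of Lemma~\ref{lemma:BMO-Property-Square-Fct}, and the $\BMO\to L^\infty$ estimates \eqref{eq:RegularisationPropertyBMO} with $k=1,2$. What differs is how the scale decomposition is organized.

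\emph{The paper's route.} It slices the $(s,\sigma)$-quadrant along rays $s=\eps t$, $\sigma=\eps(1-t)$. For each fixed $t\in(0,1/2]$ it proves $\|F_t\|_{\BMO}\lesssim\sqrt{\log(1/t)}/\sqrt{t(1-t)}$, splitting in $\eps$ at $\ell(Q)^2$ and $\ell(Q)^2/t$ and using only Cauchy--Schwarz in $\eps$. The price is a $t$-dependent constant that must be integrated at the end.

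\emph{Your route.} You keep Cartesian variables with the kernel $(s+\sigma)^{-1}$ and cut at $s,\sigma=\rho^2$. The regimes are essentially the paper's: for $t\le 1/2$ its three pieces are $s+\sigma<\ell(Q)^2$, then $s<\ell(Q)^2<s+\sigma$, then $s>\ell(Q)^2$. The difference is in how each piece is handled. You treat the local square in one stroke with the Hilbert inequality \eqref{z2}, as in the proof of Theorem~\ref{th:commutatorgeneral}. You treat the mixed strip by integrating $\sigma^{-1/2}(s+\sigma)^{-1}$ in $\sigma$ explicitly. As a result, no $1/\sqrt{t}$ or $\log(1/t)$ losses appear and no final $t$-integration is needed. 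You also reprove the Carleson bound via the local/far splitting (John--Nirenberg plus Gaussian tails), whereas the paper quotes it from Fefferman--Stein.

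One simplification is available. Once the Carleson bound is in hand, the local/far splitting inside your local piece is unnecessary. Hilbert's inequality on $(0,\rho^2)$ followed by Cauchy--Schwarz in $x$ already gives $\fint_B|F_{\mathrm{loc}}|\lesssim\|u\|_{\BMO}\|\phi\|_{\BMO}$.
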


\begin{remark}\label{rmk:MainThmHardyVariation}
 Under the same assumption as Theorem~\ref{thm:MainThmHardy}, following the same strategy, one can show that
   \begin{equation}
  \left| \int_0^1 \int_{\R^d} P_\eps C^\eps(b,u) \phi \di x \frac{\di \eps}{\eps} \right| \leq C(d) \| \nabla_{\sym} b \|_{\mathcal{H}^1} \| u \|_{\BMO} \| \phi \|_{\BMO}.
 \end{equation}
\end{remark}

We recall that $C^{\eps}(b,u)$ denotes the Ambrosio-Trevisan commutator (see Subsection~\ref{subsec:AT}) and $P_s$ denotes the heat semi-group.

\begin{proof}[Proof of Theorem~\ref{thm:RegularityThmHardy}]
As in Theorem~\ref{thm:LogRegularityViaHeatFlow}, without loss of generality, we assume $u_0 \in \mathcal{S}(\R^d)$ and $b_t \in C_c^{\infty}(\R^d; \R^d)$ uniformly in time so that $u_t \in \mathcal{S}(\R^d)$ for all $t \in [0,T]$. Then, by \eqref{eq:EnergyBalancePlusCommutator}
 \begin{align}
    \begin{split}\label{eq:EnergyBalancePlusCommutatorInApp}
	\| {P_\eps u_t-u_t} \|_{L^2}^2&-\| {P_\eps u_0-u_0} \|_{L^2}^2\\
	&= 2 \int_0^t \int_{\R^d} C^{\eps}(b_s,u_s)\cdot (P_{\eps}u_s-u_s)\di x \di s \\
	&= 2 \int_0^t \int_{\R^d} P_{\eps} C^{\eps}(b_s,u_s) u_s\di x \di s - 2 \int_0^t \int_{\R^d} C^{\eps}(b_s,u_s) u_s\di x \di s.
    \end{split}
 \end{align}
Integrating with respect to $\frac{d}{d \eps}$ leads to
\begin{equation}
 [u_t]_{\log}^2 - [u_0]_{\log}^2 = 2 \int_0^1 \int_0^t \int_{\R^d} P_{\eps} C^{\eps}(b_s,u_s) u_s\di x \di s \frac{\di \eps}{\eps} - 2 \int_0^1 \int_0^t \int_{\R^d} C^{\eps}(b_s,u_s) u_s\di x \di s \frac{\di \eps}{\eps}.
\end{equation}
By regularity of $b_t$ and $u_t$, both integrals on the right-hand side are absolutely integrable and therefore by Fubini, the right-hand side equals
\begin{equation}
 2 \int_0^t \int_0^1 \int_{\R^d} P_{\eps} C^{\eps}(b_s,u_s) u_s\di x \frac{\di \eps}{\eps} \di s - 2 \int_0^t \int_0^1 \int_{\R^d} C^{\eps}(b_s,u_s) u_s\di x \frac{\di \eps}{\eps} \di s
\end{equation}
which, due to Theorem~\ref{thm:MainThmHardy} and Remark~\ref{rmk:MainThmHardyVariation}, is estimated by
\begin{align*}
 2 \int_0^T \left| \int_0^1 \int_{\R^d} P_{\eps} C^{\eps}(b_s,u_s) u_s\di x \frac{\di \eps}{\eps} \right| \di s &+ 2 \int_0^T \left| \int_0^1 \int_{\R^d} C^{\eps}(b_s,u_s) u_s\di x \frac{\di \eps}{\eps} \right| \di s \\
 &\leq C(d) \| \nabla_{\sym} b \|_{L^1_t \mathcal{H}^1_x} \| u \|_{L^{\infty}_{t,x}}^2.
\end{align*}
\end{proof}
The remainder of this section is dedicated to proving Theorem~\ref{thm:MainThmHardy}.

\subsection{Proof of Theorem~\ref{thm:MainThmHardy}}
We will rely on the next proposition.

\begin{proposition}\label{prop:BoundOnT}
 Let $u, \phi \in \BMO(\R^d) \cap C^1_c(\R^d)$ and $1 \leq i,j \leq d$ and set
 \begin{equation}
  T(u,\phi)(x) \coloneqq \int_0^1 \int_0^{\eps} \partial_i P_s u (x) \partial_j P_{\eps - s} \phi(x) \di s \, \frac{\di \eps}{\eps}.
 \end{equation}
 Then 
 \begin{equation}
    \| T(u, \phi) \|_{\BMO} \leq C(d) \| u \|_{\BMO} \| \phi \|_{\BMO}. 
 \end{equation}
\end{proposition}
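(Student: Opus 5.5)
The plan is to verify the BMO bound directly from the definition. Fix a ball $B=B_r(x_0)$; it suffices to find a constant $c_B$ with $\fint_B |T(u,\phi)-c_B|\di x\le C(d)\|u\|_{\BMO}\|\phi\|_{\BMO}$. Two standard BMO facts will be used. First, the pointwise gradient bound $|\nabla P_s f|\le C(d)s^{-1/2}\|f\|_{\BMO}$, obtained by writing $\nabla P_s f=\nabla G_s\ast(f-f_{B_{\sqrt s}(x)})$ and summing over dyadic annuli. Second, the Fefferman--Stein Carleson measure property: $|\nabla P_s f(x)|^2\di x\di s$ is a Carleson measure on $\R^d\times(0,\infty)$ with norm $\le C(d)\|f\|_{\BMO}^2$, where heat time $s$ corresponds to spatial scale $\sqrt s$. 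I split $T=T_1+T_2$ according to $\eps\le r^2$ and $r^2<\eps\le 1$; the second piece is empty if $r\ge1$.

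For $T_1$ (all heat times $\le r^2$) I localize. Set $B^*=B_{2r}(x_0)$ and write $u=u_{B^*}+u_1+u_2$ with $u_1=(u-u_{B^*})\chi_{B^*}$, $u_2=(u-u_{B^*})\chi_{(B^*)^c}$; decompose $\phi=\phi_{B^*}+\phi_1+\phi_2$ in the same way. Constants are annihilated by $\nabla P_s$. By John--Nirenberg, $\|u_1\|_{L^2}\le C|B|^{1/2}\|u\|_{\BMO}$. For the local--local term, the argument leading to \eqref{z3}--\eqref{z4} in the proof of Theorem~\ref{th:commutatorgeneral} gives the pointwise bound $|T_1(u_1,\phi_1)|\le C\,Su_1\,S\phi_1$. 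Cauchy--Schwarz and Lemma~\ref{lemma:square function integrability} with $p=2$ then give $\fint_B|T_1(u_1,\phi_1)|\le C|B|^{-1}\|u_1\|_{L^2}\|\phi_1\|_{L^2}\le C\|u\|_{\BMO}\|\phi\|_{\BMO}$. Whenever a far part $u_2$ or $\phi_2$ enters, the point $x\in B$ is at distance at least $r$ from its support. Summing over dyadic annuli gives $|\nabla P_s u_2(x)|\le C s^{-1/2}e^{-cr^2/s}\|u\|_{\BMO}$ for $s\le r^2$, and likewise for $\phi_2$. I pair this with $|\nabla P_{\eps-s}\phi_1|$, controlled in $L^2(B)$ by the square function, or with the trivial bound $(\eps-s)^{-1/2}\|\phi\|_{\BMO}$. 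Integrating over $0<s<\eps\le r^2$ against $\di s\,\di\eps/\eps$ is then finite, because of the exponential factor. So for $T_1$ I take $c_B=0$.

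For $T_2$ (scales $\eps>r^2$) I take $c_B=T_2(x_0)$ and estimate $\fint_B|T_2(x)-T_2(x_0)|$. By symmetry I restrict to the half $s\ge\eps/2$, where the $u$ factor lives at scale $\sqrt s\gtrsim r$. There $|\nabla^2P_su|\le Cs^{-1}\|u\|_{\BMO}$, so the oscillation of $\partial_iP_su$ over $B$ is at most $C r s^{-1}\|u\|_{\BMO}$. The $\phi$ factor $\partial_jP_{\eps-s}\phi$ may live at scale $\sqrt{\eps-s}\ll r$, and this is the term I expect to be the main obstacle. Splitting the difference of products, the piece where $u$ oscillates is bounded by
\[
\int_{r^2}^1\!\int_{\eps/2}^{\eps}\frac{r}{s}\,\fint_B|\partial_jP_{\eps-s}\phi|\di s\frac{\di\eps}{\eps}\lesssim \int_{r^2}^{\infty}\frac{r}{\eps^{2}}\int_0^{\eps/2}\Big(\fint_B|\nabla P_\tau\phi|^2\Big)^{1/2}\di\tau\,\di\eps .
\]
For $\tau\ge r^2$ I use the pointwise bound $\tau^{-1/2}\|\phi\|_{\BMO}$, and for $\tau<r^2$ the Carleson bound $\int_0^{r^2}\fint_B|\nabla P_\tau\phi|^2\di\tau\le C\|\phi\|_{\BMO}^2$ together with Cauchy--Schwarz in $\tau$. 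Both yield a convergent $\eps$-integral of size $C\|u\|_{\BMO}\|\phi\|_{\BMO}$. The other piece, where $\partial_iP_su(x_0)$ is frozen and $\phi$ oscillates, I would first treat the same way after swapping the roles of the factors: integrate by parts in $s$ using $\partial_s P=\Delta P$, or write $\partial_jP_{\eps-s}\phi(x)-\partial_jP_{\eps-s}\phi(x_0)$ and again use the Carleson/pointwise split. If that fails, I would fall back on the localization $\phi=\phi_{B^*}+\phi_1+\phi_2$ from the $T_1$ step, with the far part now smooth at scale $r$ and the local part handled by the $L^2$ square function. Collecting the bounds for $T_1$ and $T_2$ gives $\|T(u,\phi)\|_{\BMO}\le C(d)\|u\|_{\BMO}\|\phi\|_{\BMO}$.
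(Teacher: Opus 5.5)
Your outline is correct, and it is organized differently from the paper. The paper substitutes $s=\eps t$ and writes $T(u,\phi)=\int_0^1F_t\,\di t$. For each fixed $t\in(0,1/2]$ it splits the $\eps$-integral at $\rho=\ell(Q)^2$ and $\rho/t$. The two small-scale pieces are bounded in mean by the Carleson estimate of Lemma~\ref{lemma:BMO-Property-Square-Fct}, used as a black box. The large-scale piece is bounded in oscillation by the Hessian bound \eqref{eq:RegularisationPropertyBMO}. The result is $\|F_t\|_{\BMO}\le C\sqrt{\log(1/t)}/\sqrt{t(1-t)}$, which is integrable in $t$.

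You instead split at the absolute scale $\eps=r^2$, and for $\eps\le r^2$ you essentially reprove the needed bilinear Carleson bound by hand. You use John--Nirenberg, the pointwise bound $|T_1(u_1,\phi_1)|\le C\,Su_1\,S\phi_1$ from \eqref{z2}--\eqref{z4}, the $p=2$ case of Lemma~\ref{lemma:square function integrability}, and Gaussian off-diagonal decay for the far parts. This needs only $L^2$ theory and elementary $\BMO$ facts rather than the Fefferman--Stein Carleson characterization, at the price of more case analysis. Your $T_2$ does not need the Carleson bound either: for $\tau<r^2$ the pointwise bound $|\nabla P_\tau\phi|\le C\tau^{-1/2}\|\phi\|_{\BMO}$ already gives $\int_0^{r^2}\tau^{-1/2}\di\tau=2r$, the same as Cauchy--Schwarz plus Carleson. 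So your route avoids Lemma~\ref{lemma:BMO-Property-Square-Fct} entirely.

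The piece you flag as the main obstacle closes directly, with no integration by parts or second localization. For $s\in(\eps/2,\eps)$ put $\tau=\eps-s$. Then $|\partial_iP_su(x_0)|\le C\eps^{-1/2}\|u\|_{\BMO}$, and for $x\in B$
\[
|\partial_jP_\tau\phi(x)-\partial_jP_\tau\phi(x_0)|\le C\|\phi\|_{\BMO}\min\{r\tau^{-1},\tau^{-1/2}\}.
\]
This follows from \eqref{eq:RegularisationPropertyBMO} with $k=2$ and the mean value theorem, and with $k=1$ applied to each term. Hence $\int_0^{\eps/2}\min\{r\tau^{-1},\tau^{-1/2}\}\di\tau\le Cr\bigl(1+\log(\eps/r^2)\bigr)$, and
\[
\int_{r^2}^{\infty}\eps^{-3/2}\,r\bigl(1+\log(\eps/r^2)\bigr)\di\eps=\int_1^\infty\lambda^{-3/2}(1+\log\lambda)\di\lambda<\infty .
\]
The second-derivative bound for $\tau\ge r^2$ is essential. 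Bounding both terms separately by $\tau^{-1/2}$ gives $\int_{r^2}^1\eps^{-1}\di\eps=\log(1/r^2)$, which is not uniform in $r$.

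One small repair is needed in $T_1$. The function $\phi_1=(\phi-\phi_{B^*})\chi_{B^*}$ is not controlled in $\BMO$ by $\phi$, because the truncation can create an unbounded jump across $\partial B^*$. So the ``trivial bound'' for $\nabla P_\tau\phi_1$ should be obtained on $B$ from $\nabla P_\tau\phi_1=\nabla P_\tau\phi-\nabla P_\tau\phi_2$ for $\tau\le r^2$. Alternatively, replace it by $\|\nabla P_\tau\phi_1\|_{L^2}\le C\tau^{-1/2}\|\phi_1\|_{L^2}$ from \eqref{heat regularization}. In either case the exponential factor from the far part makes the $\di s\,\di\eps/\eps$ integral finite, uniformly in $r$.
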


To prove it, we use the following $\BMO$-to-$L^{\infty}$ regularisation property:
\begin{equation}\label{eq:RegularisationPropertyBMO}
 \| \nabla^k P_s f \|_{L^{\infty}} \leq \dfrac{C(d,k)}{s^{k / 2}} \| f \|_{\BMO} \quad \forall k \geq 1.
\end{equation}
and the next lemma which follows from \cite[Section 1]{EFUN75}
\begin{lemma}\label{lemma:BMO-Property-Square-Fct}
 For any $f \in \BMO (\R^d)$, we have 
 \begin{equation}\label{eq:BMO-Property-Square-Fct-Eq}
  \sup_{Q} \int_0^{\ell(Q)^2} \fint_{Q} |\nabla P_s f|^2 \di x \di s \leq C(d) \| f \|_{\BMO}^2
 \end{equation}
 where $\ell(Q)$ denotes the length of the sides of $Q$. 
\end{lemma}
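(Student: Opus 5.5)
The plan is the classical Fefferman–Stein localisation argument. It splits $f$ into a part near $Q$, handled by the $L^2$ identity for the square function, and a part far from $Q$, handled by Gaussian decay.

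\textbf{Reduction and splitting.} Fix a cube $Q$ with center $x_Q$ and side $\ell=\ell(Q)$, and let $Q^*=4Q$. Since $\nabla P_s$ annihilates constants, we may replace $f$ by $f-f_{Q^*}$. We then write
\[
f-f_{Q^*}=g_1+g_2,\qquad g_1:=(f-f_{Q^*})\chi_{Q^*},\qquad g_2:=(f-f_{Q^*})\chi_{\R^d\setminus Q^*}.
\]
By $|\nabla P_s f|^2\le 2|\nabla P_s g_1|^2+2|\nabla P_s g_2|^2$, it suffices to bound the left-hand side of \eqref{eq:BMO-Property-Square-Fct-Eq} separately for $g_1$ and for $g_2$.

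\textbf{Local part.} We use the $p=2$ identity from the proof of Lemma~\ref{lemma:square function integrability}, namely $\int_0^\infty\!\int_{\R^d}|\nabla P_s g|^2\,\di x\,\di s=\tfrac12\|g\|_{L^2}^2$. This holds for $g\in L^2$ by density. It gives
\[
\int_0^{\ell^2}\fint_Q|\nabla P_s g_1|^2\,\di x\,\di s\le \frac{1}{2|Q|}\int_{Q^*}|f-f_{Q^*}|^2\,\di x\le C(d)\|f\|_{\BMO}^2 .
\]
The last step is the John–Nirenberg inequality, which shows that the $L^2$-oscillation on cubes is controlled by the BMO norm.

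\textbf{Far part.} This is the main step. For $x\in Q$ and $y\notin Q^*$ we have $|x-y|\ge c\,|y-x_Q|\ge c\ell$. Using $|\nabla G_s(z)|\le C s^{-(d+1)/2}e^{-|z|^2/(8s)}$, we decompose into the dyadic annuli $2^{k+1}Q^*\setminus 2^kQ^*$. On each annulus we use the standard estimate
\[
\fint_{2^{k+1}Q^*}|f-f_{Q^*}|\le C(1+k)\|f\|_{\BMO}.
\]
This yields, for $x\in Q$ and $s\le\ell^2$,
\[
|\nabla P_s g_2(x)|\le C\|f\|_{\BMO}\sum_{k\ge0}(1+k)\frac{(2^k\ell)^d}{s^{(d+1)/2}}e^{-c4^k\ell^2/s}.
\]
Setting $t=s/\ell^2\le 1$, the $k$-th term equals $\ell^{-1}t^{-(d+1)/2}2^{kd}e^{-c4^k/t}$. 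Absorbing the polynomial factors into half of the exponential, the sum is bounded by $C\ell^{-1}e^{-c'/t}$. Hence
\[
\int_0^{\ell^2}\fint_Q|\nabla P_s g_2|^2\le C\|f\|_{\BMO}^2\int_0^{\ell^2}\ell^{-2}e^{-2c'\ell^2/s}\,\di s = C\|f\|_{\BMO}^2\int_0^1 e^{-2c'/t}\,\di t\le C(d)\|f\|_{\BMO}^2 .
\]

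\textbf{Conclusion and expected obstacle.} Combining the two bounds and taking the supremum over all cubes $Q$ proves the lemma. The only delicate point is the far-part summation. There we must use that $s\le \ell(Q)^2$, so that the Gaussian factor beats both the logarithmic growth of the BMO averages and the volume factor $2^{kd}$. This restriction is exactly why the time integral in \eqref{eq:BMO-Property-Square-Fct-Eq} stops at $\ell(Q)^2$.
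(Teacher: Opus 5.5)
Your proof is correct. The paper gives no argument of its own for this lemma and simply cites Section~1 of \cite{EFUN75}; your proof is the standard Fefferman--Stein localization on which such Carleson-measure estimates rest, made self-contained by reusing the identity $\int_0^\infty\!\int_{\R^d}|\nabla P_s g|^2\,\di x\,\di s=\tfrac12\|g\|_{L^2}^2$ already derived in the paper.

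The local part is fine: that identity holds for all $g\in L^2$ by Plancherel, and combined with John--Nirenberg it gives the bound.

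The far-part summation also holds as you wrote it. For $t=s/\ell(Q)^2\le 1$, the factor $t^{-(d+1)/2}e^{-c4^k/t}$ is at most $C e^{-c'/t}e^{-c''4^k}$, and this absorbs $(1+k)2^{kd}$. The same annular estimate also shows that $P_s f$ is well defined for $f\in\BMO$, which your argument uses implicitly.
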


\begin{proof}[Proof of Proposition~\ref{prop:BoundOnT}]
By Fubini (using $u, \phi \in C^1_c(\R^d)$),
\begin{equation}
 T(u, \phi)(x) = \int_0^1 F_t(x) \di t \quad \text{with} \quad F_t(x) \coloneqq \int_0^1 \partial_{i} P_{\eps t} u(x) \partial_j P_{\eps(1-t)} \phi (x) \di \eps.
\end{equation}
We estimate $\| F_t \|_{\BMO}$ for each $t \in (0,1)$.
Let $Q \subseteq \R^d$ be a cube of length $\ell(Q)$ and let $\rho = \ell(Q)^2$ and estimate
\begin{equation*}
 \fint_Q \left| F_t(x) - \fint_Q F_t(y) \di y \right| \di x.
\end{equation*}
Up to exchanging the roles of $u$ and $\phi$, $F_t$ is identical to $F_{1-t}$.
Therefore, it suffices to consider $t \in (0, 1/2]$.
We split $F_t$ in three terms by decomposing the interval of integration in $\eps$ into $(0,a)$, $(a,b)$ and $(b,1)$ with $a = \min (1,\rho)$ and $b = \min (1, \rho / t)$ with the convention that the interval is empty if the two endpoints are identical. We define
\begin{align*}
 F_t^1(x) &\coloneqq \int_0^a \partial_{i} P_{\eps t} u(x) \partial_j P_{\eps(1-t)} \phi (x) \di \eps, \\
 F_t^2(x) &\coloneqq \int_a^b \partial_{i} P_{\eps t} u(x) \partial_j P_{\eps(1-t)} \phi (x) \di \eps, \\
 F_t^3(x) &\coloneqq \int_b^1 \partial_{i} P_{\eps t} u(x) \partial_j P_{\eps(1-t)} \phi (x) \di \eps.
\end{align*}

For $F_t^1$ and $F_t^2$, we will directly estimate the average of their absolute value over $Q$. This is sufficient because
\begin{equation}
    \fint_Q \left| F_t^i - \fint_Q F_t^i \di y \right| \di x \leq 2 \fint_Q |F_t^i| \di x \quad \forall i = 1,2.
\end{equation}
For $F_t^3$, we will estimate its mean oscillation over $Q$.

For the first term, we will only use Lemma~\ref{lemma:BMO-Property-Square-Fct}, for the second one we will use Lemma~\ref{lemma:BMO-Property-Square-Fct} and \eqref{eq:RegularisationPropertyBMO} while for the third one we will use only \eqref{eq:RegularisationPropertyBMO}.
For the first term, using Cauchy-Schwarz,
\begin{equation}
 \fint_Q |F_t^1| \di x \leq \left( \int_0^\rho \fint_Q |\nabla P_{\eps t} u (x)|^2 \di x \di \eps \right)^{1/2} \left( \int_0^\rho \fint_Q |\nabla P_{\eps(1- t)} \phi (x)|^2 \di x \di \eps \right)^{1/2}.
\end{equation}
Applying a change of variable to both factors on the right-hand side and then Lemma~\ref{lemma:BMO-Property-Square-Fct}, we deduce
\begin{equation}\label{eq:FirstTermConclusion}
 \fint_Q |F_t^1| \di x \leq \dfrac{C(d)}{\sqrt{t(1-t)}} \| u \|_{\BMO} \| \phi \|_{\BMO}.
\end{equation} 
For the second term, using Cauchy-Schwarz,
\begin{equation}
  \fint_Q |F_t^2| \di x \leq \left( \int_\rho^{\rho/t} \fint_Q |\nabla P_{\eps t} u (x)|^2 \di x \di \eps \right)^{1/2} \left( \int_\rho^{\rho/t} \fint_Q |\nabla P_{\eps(1- t)} \phi (x)|^2 \di x \di \eps \right)^{1/2}.
\end{equation}
Here we replaced the domain of integration $(a,b)$ by $(\rho, \rho / t)$. If $\rho < 1$, this is justified because $\rho = a < b < \rho / t$. If $\rho \geq 1$, this is justified since $a = 1 = b$.
For the first factor on the right-hand side, we use a change of variable and then Lemma~\ref{lemma:BMO-Property-Square-Fct} to estimate it by $C(d) t^{- 1/2} \| u \|_{\BMO}$. The second factor is estimated using \eqref{eq:RegularisationPropertyBMO}:
\begin{equation}
 \int_\rho^{\rho/t} \fint_Q |\nabla P_{\eps(1- t)} \phi (x)|^2 \di x \di \eps \leq \dfrac{\| \phi \|_{\BMO}^2}{1 - t} \int_\rho^{\rho/t} \frac{1}{\eps} \di \eps \leq \dfrac{\| \phi \|_{\BMO}^2 \log(1/t)}{1 - t}.
\end{equation}
Thus,
\begin{equation}\label{eq:SecondTermConclusion}
 \fint_Q |F_t^2| \di x \leq \dfrac{C(d) \sqrt{\log(1/t)}}{\sqrt{t(1-t)}} \| u \|_{\BMO} \| \phi \|_{\BMO}.
\end{equation}
For the third term, we start by observing that it equals zero if $t \leq \rho$. Therefore, we may assume that $\rho < t$. We compute the gradient of $F_t^3$:
\begin{equation}
 \nabla F_t^3(x) = \int_b^1 \partial_{i} \nabla P_{\eps t} u(x) \partial_j P_{\eps(1-t)} \phi (x) \di \eps + \int_b^1 \partial_{i} P_{\eps t} u(x) \partial_j \nabla P_{\eps(1-t)} \phi (x) \di \eps.
\end{equation}
Using \eqref{eq:RegularisationPropertyBMO} and $t \in (0,1/2]$, the first term is estimated by
\begin{equation}
 C(d) \int_{\rho / t}^1 \frac{ \| u \|_{\BMO} \| \phi \|_{\BMO} }{\eps t \sqrt{\eps(1-t)}} \di \eps \leq C(d) \frac{ \| u \|_{\BMO} \| \phi \|_{\BMO} }{t} \int_{\rho / t}^1 \eps^{- 3 / 2} \di \eps \leq C(d) \frac{ \| u \|_{\BMO} \| \phi \|_{\BMO} }{\sqrt{\rho t} }
\end{equation}
while the second term is estimated by
\begin{equation}
 C(d) \int_{\rho / t}^1 \frac{ \| u \|_{\BMO} \| \phi \|_{\BMO} }{\sqrt{\eps t} \eps(1-t)} \di \eps \leq C(d) \frac{ \| u \|_{\BMO} \| \phi \|_{\BMO} }{\sqrt{t}} \int_{\rho / t}^1 \eps^{- 3 / 2} \di \eps \leq C(d) \frac{ \| u \|_{\BMO} \| \phi \|_{\BMO} }{\sqrt{\rho} }.
\end{equation}
Hence
\begin{equation}\label{eq:BoundGradientThirdTerm}
 \| \nabla F_t^3 \|_{L^{\infty}} \leq C(d) \frac{ \| u \|_{\BMO} \| \phi \|_{\BMO} }{\sqrt{\rho t} }.
\end{equation}
We use \eqref{eq:BoundGradientThirdTerm} to estimate
\begin{equation}\label{eq:ThirdTermConclusion}
 \fint_Q \left| F_t^3(x) - \fint_Q F_t^3(y) \di y \right| \di x \leq \| \nabla F_t^3 \|_{L^{\infty}} \ell(Q) \leq C(d) \frac{ \| u \|_{\BMO} \| \phi \|_{\BMO} }{\sqrt{t} }
\end{equation}
where we used $\sqrt{\rho} = \ell(Q)$.
From \eqref{eq:FirstTermConclusion}, \eqref{eq:SecondTermConclusion} and \eqref{eq:ThirdTermConclusion}, we deduce
\begin{equation}
 \fint_Q \left| F_t(x) - \fint_Q F_t(y) \di y \right| \di x \leq C(d) \left( \dfrac{1}{\sqrt{t(1-t)}} + \dfrac{\sqrt{\log(1/t)}}{\sqrt{t(1-t)}} +  \frac{1}{\sqrt{t} } \right) \| u \|_{\BMO} \| \phi \|_{\BMO} 
\end{equation}
and therefore 
\begin{equation}
 \| F_t \|_{\BMO} \leq \dfrac{C(d) \sqrt{\log(1/t)}}{\sqrt{t(1-t)}} \| u \|_{\BMO} \| \phi \|_{\BMO}.
\end{equation}
From this, we deduce that $T(u,\phi) \in \BMO(\R^d)$ and
\begin{equation}
 \| T(u,\phi) \|_{\BMO} \leq C(d) \| u \|_{\BMO} \| \phi \|_{\BMO} \int_0^{1/2} \dfrac{\sqrt{\log(1/t)}}{\sqrt{t(1-t)}} \di t \leq C(d) \| u \|_{\BMO} \| \phi \|_{\BMO}.
\end{equation}
\end{proof}

\begin{proof}[Proof of Theorem~\ref{thm:MainThmHardy}]
By Proposition~\ref{prop: Ambrosio-trevisan commutator} \ref{item:RewritingOfCommutator},
\begin{equation}
 \int_{\R^d} C^\eps(b,u) \phi \di x = 2 \int_0^\eps \int_{\R^d} \nabla_{\sym} b \nabla P_s u \nabla P_{\eps - s} \phi \, \di x \di s
\end{equation}
 and therefore by Fubini (using $u, \phi \in C^1_c(\R^d)$),
 \begin{equation}
  \int_0^1 \int_{\R^d} C^\eps (b,u) \phi \di x \frac{\di \eps}{\eps} = \sum_{i,j = 1}^d \int_{\R^d} (\nabla_{\sym} b)_{ij} \underbrace{\int_0^1 \int_0^\eps \partial_i P_s u \partial_j P_{\eps - s} \phi \di s \frac{\di \eps}{\eps}}_{=: T_{ij}(u,\phi)} \di x.
 \end{equation}
 By Proposition~\ref{prop:BoundOnT}, $\| T_{ij}(u, \phi) \|_{\BMO} \leq C(d) \| u \|_{\BMO} \| \phi \|_{\BMO}$ and by \eqref{eq:H1AgainstBMO}, we get
 \begin{equation}
  \left| \int_0^1 \int_{\R^d} C^{\eps}(b,u) \phi \di x \frac{\di \eps}{\eps} \right| \leq C(d) \| \nabla_{\sym} b \|_{\mathcal{H}^1} \| u \|_{\BMO} \| \phi \|_{\BMO}.
 \end{equation}
\end{proof}

\bibliographystyle{plain}
\bibliography{biblio}
 
\end{document}